\newtheorem{theorem}{Theorem}[section]
\newtheorem{proposition}[theorem]{Proposition}
\newtheorem{lemma}[theorem]{Lemma}
\newtheorem{remark}[theorem]{Remark}
\newtheorem{corollary}[theorem]{Corollary}
\DeclareMathOperator{\dv}{div}
\DeclareMathOperator{\ric}{Ric}
\DeclareMathOperator{\weyl}{Weyl}
\DeclareMathOperator{\hess}{Hess}
\DeclareMathOperator{\vol}{vol}
\DeclareMathOperator{\SU}{SU}
\DeclareMathOperator{\divergence}{div}
\newcommand{\riemuudu}[4]{R_{\phantom{#1 #2} #3}^{#1 #2 \phantom{#3} #4}}
\title{Rigidity of quasi-Einstein metrics: The incompressible case}
\author{Eric Bahuaud}
\address[Eric Bahuaud]{Department of Mathematics, Seattle University, Seattle, WA 98122, USA}
\email{bahuaude@seattleu.edu}
\author{Sharmila Gunasekaran}
\address[Sharmila Gunasekaran]{The Fields Institute for Research in Mathematical Sciences, 222 College St, Toronto ON, Canada M5T 3J1}
\email{gunasek1@ualberta.ca}
\author{Hari K Kunduri}
\address[Hari K Kunduri]{Department of Mathematics and Statistics and Department of Physics and Astronomy, McMaster University, Hamilton ON, Canada L8S 4K1}
\email{kundurih@mcmaster.ca}
\author{Eric Woolgar}
\address[Eric Woolgar]{Department of Mathematical and Statistical Sciences and Theoretical Physics Institute, University of Alberta, Edmonton, Alberta, Canada T6G 2G1}
\email{ewoolgar@ualberta.ca}
\begin{document}

\maketitle

\begin{abstract}
As part of a programme to classify quasi-Einstein metrics $(M,g,X)$ on closed manifolds and near-horizon geometries of extreme black holes, we study such spaces when the vector field $X$ is divergence-free but not identically zero. This condition is satisfied by left-invariant quasi-Einstein metrics on compact homogeneous spaces (including the near-horizon geometry of an extreme Myers-Perry black hole with equal angular momenta in two distinct planes), and on certain bundles over K\"ahler-Einstein manifolds. We find that these spaces exhibit a mild form of rigidity: they always admit a one-parameter group of isometries generated by $X$. Further geometrical and topological restrictions are also obtained.
\end{abstract}

\section{Introduction}
\setcounter{equation}{0}

\noindent One of the most important problems in geometric analysis is to find solutions of the Einstein equation
\begin{equation}
\label{eq1.1}
\ric = \lambda g,
\end{equation}
where $\lambda$ is a constant and $g$ is a complete Riemannian metric on a manifold $M$ or perhaps a Lorentzian metric on a spacetime with suitable completeness and causality conditions. Here $\ric$ is the Ricci tensor of $g$. In the course of solving this problem, one often deals with special cases that induce geometric equations on submanifolds. In many of these cases, an equation of the form
\begin{equation}
\label{eq1.2}
\ric +\frac12 \pounds_X g -\frac{1}{m} X\otimes X = \lambda g
\end{equation}
arises, where $X$ denotes a 1-form (in a common abuse of notation, we also use $X$ to denote the metric-dual vector field, as in the Lie derivative term here), and $m$ is a constant. One such example arises when $g$ is a warped product metric, in which case $X$ is a gradient vector field and $m$ is a positive integer. Another case is the near horizon limit, which arises on vacuum stationary spacetimes that contain a so-called extreme black hole. Then equation \eqref{eq1.2} with $m=2$ describes the geometry of a degenerate Killing horizon (coinciding with the black hole's event horizon) on which a Killing vector field that is timelike immediately outside the horizon becomes null. The horizon is a null hypersurface in spacetime ruled by integral curves of the Killing field. The degeneracy condition implies that these integral curves are affinely parametrized null geodesics, called the null generators of the horizon. The metric $g$ is a Riemannian metric giving the inner product on tangent spaces to this hypersurface modulo the null  Killing vector field tangent to the generating curves. We may take $g$ to be a metric on compact cross-sections $M$ of the horizon that are transversal to the null generators.

But equation \eqref{eq1.2} may be studied with $m$ any real number. The case of $m=0$ may be included if one takes $X$ to vanish, so this is used to denote the Einstein case. As well, $m=\pm \infty$ denotes the case where $\ric+\frac12 \pounds_X g =\lambda g$, which is the \emph{Ricci soliton equation}. The case of $0<m<\infty$ has been widely studied, usually with $X$ taken to be a gradient $X=\nabla f$. Some important works of this nature include for example \cite{KK}, \cite{Case}, and \cite{CSW}, but some authors also consider greater generality, allowing the constants $m$ and $\lambda$ to be replaced by functions (see, e.g., \cite{FFT}).

The case of near horizon geometries, however, motivates the study of the case of \eqref{eq1.2} when $X$ is not necessarily gradient. In their work on near horizon geometries, Kunduri and Lucietti \cite{KL} noted that the 1-form $X$ can be decomposed into a closed piece and a co-closed piece. In our previous work \cite{BGKW}, we studied the case where $X$ is closed, which obviously contains gradient vector fields as a subcase. When $X$ is closed, the quasi-Einstein manifold $(M,g,X)$ with $m=2$ is called a \emph{static near horizon geometry}, since static extreme black holes produce degenerate Killing horizons with this geometry (as do some non-static extreme black holes). In our work we allowed $m>0$ to be arbitrary; we did not require $m=2$. We obtained a structure result which improved and generalized a previous result of Chru\'sciel, Reall, and Tod \cite{CRT}. Soon thereafter, Wylie \cite{Wylie} used our results to give a satisfactory rigidity theorem for the static case.

At the opposite end of things, there is the case where $X$ is co-closed, i.e., divergence-free. We call this the \emph{incompressible} case. Left-invariant quasi-Einstein metrics on homogeneous spaces obey this condition. Indeed, they obey $\pounds_X g=0$, from which we easily obtain $\divergence X=0$ by taking the trace. An important near horizon geometry which obeys this condition is the 3-dimensional near horizon geometry of a 5-dimensional extreme Myers-Perry black hole with equal angular momenta in two different 2-planes. The near horizon geometry of this is a Berger sphere; i.e., $g$ is a certain left-invariant metric on $\SU(2)$. A near horizon geometry is an $m=2$ solution of \eqref{eq1.2}, but since $\pounds_X g =0$ it is easy to extend this example to any $m>0$ by rescaling $X$. Examples that are not homogeneous spaces are also known; see the Appendix for explicit examples.

It is natural to ask whether there is some rigidity in the incompressible case. We do not expect as much as in the static case, since we do not require $X$ to be hypersurface-orthogonal (and in the above examples, it is not). But there is considerable rigidity nonetheless, as our main theorem shows.

%% eb-- how's this?  Added following line, and mentioned what n in the statement.
In two dimensions, since $\ric = (\mathrm{R}/2) g $, if $\pounds_X g =0$, then equation \eqref{eq1.2} can only be satisfied if $X=0$.  Thus our main result is of interest when $n \geq 3$.

\begin{theorem}\label{theorem1.1}
Let $m\neq 0, -2$ and $\lambda$ be constants. For $n\geq 3$, let $M^n$ be a closed connected manifold such that $(M,g,X)$ obeys \eqref{eq1.2}, where $X$ is a vector field that is not identically zero and obeys $\divergence X=0$. Then $X$ obeys Killing's equation $\pounds_X g=0$. Both $X$ and $dX$ have constant norm. The integral curves of $X$ are complete geodesics.
\end{theorem}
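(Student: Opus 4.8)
\noindent The plan is to extract the scalar trace of \eqref{eq1.2}, then to prove the Killing property $\pounds_X g=0$ by a single integral (Bochner-type) identity, and finally to read off the remaining assertions as consequences. Taking the trace of \eqref{eq1.2} and using $\divergence X=0$ (so that $\tr\pounds_X g=2\divergence X=0$) gives the pointwise relation $\scal=n\lambda+\tfrac1m|X|^2$. Write $S:=\tfrac12\pounds_X g$ for the symmetric part of $\nabla X$; it is trace-free because $\divergence X=0$, and \eqref{eq1.2} reads $S=\lambda g-\ric+\tfrac1m X\otimes X$.

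The heart of the matter is to show $S\equiv 0$. First I would compute $\divergence S$. Using the contracted second Bianchi identity $\nabla^i\ric_{ij}=\tfrac12\nabla_j\scal$, the scalar relation above, and $\divergence X=0$, every symmetric and gradient term cancels and one is left with $(\divergence S)_j=\tfrac1m X^i(dX)_{ij}=\tfrac1m(\iota_X dX)_j$; that is, $\divergence S$ is, up to the factor $1/m$, the contraction of $X$ into $dX$. Since $M$ is closed, integrating by parts then gives
\[
\int_M |S|^2 = -\int_M \langle \divergence S, X\rangle = -\frac1m\int_M dX(X,X)=0,
\]
because $dX$ is a $2$-form, so $dX(X,X)=0$ pointwise. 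Hence $S\equiv 0$ and $X$ is Killing. This is the step where I expect the only genuine work to lie — arranging the divergence identity so that all but the antisymmetric term cancels — and it is plainly where $m\neq 0$ is needed for $1/m$ to make sense. Since the theorem also excludes $m=-2$, I would re-examine the differentiation of the scalar relation to check whether a coefficient I have absorbed degenerates there.

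\noindent Once $X$ is Killing the remaining claims follow quickly. The vanishing of $S$, together with the identity $\divergence S=\tfrac1m\iota_X dX$ established above, shows $\iota_X dX=0$; and for a Killing field $\nabla X$ is antisymmetric, so $(\iota_X dX)_j=2X^i\nabla_iX_j=2(\nabla_X X)_j$. Hence $\nabla_X X=0$: the integral curves of $X$ are geodesics, and they are complete because $M$ is compact. Antisymmetry of $\nabla X$ also gives $(\nabla_X X)_j=-\tfrac12\nabla_j|X|^2$, so $\nabla_X X=0$ forces $|X|$ to be constant. Finally, substituting $|X|=\text{const}$ into the Killing Bochner identity $\tfrac12\Delta|X|^2=|\nabla X|^2-\ric(X,X)$ gives $|\nabla X|^2=\ric(X,X)=\lambda|X|^2+\tfrac1m|X|^4$, a constant; since $\nabla X=\tfrac12 dX$ for a Killing field, $dX$ also has constant norm, completing the proof.
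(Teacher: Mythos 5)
Your proof is correct, but it takes a genuinely different and considerably shorter route than the paper's. The paper derives a chain of divergence identities for $\Delta X_j$, $\Delta(|X|^2)$ and $\Delta(\divergence X)$ (Lemmas \ref{lemma2.1}, \ref{lemma2.3}, \ref{lemma2.5}), combines them into the integral identity \eqref{eq1.5} of Proposition \ref{proposition1.6}, and only then sets $\divergence X=0$ to conclude $\pounds_X g=0$; the hypothesis $m\neq -2$ enters there because a prefactor $\left(1+\frac{2}{m}\right)$ must be divided out in \eqref{eq2.26}. You instead compute the divergence of the symmetric part directly: with $S=\frac12\pounds_X g=\lambda g-\ric+\frac1m X\otimes X$, the contracted Bianchi identity, the traced equation $R=n\lambda+\frac1m|X|^2$, and $\divergence X=0$ give
\begin{equation*}
\nabla^i S_{ij}=\frac1m\, X^i\left(\nabla_i X_j-\nabla_j X_i\right)=\frac1m\, X^i (dX)_{ij},
\end{equation*}
and pairing with $X^j$ and integrating annihilates the right-hand side by antisymmetry, while integration by parts turns the left-hand side into $-\int_M|S|^2\,dV$ (contracting the symmetric tensor $S$ against $\nabla X$ selects only the symmetric part); this last step is the one detail worth writing out explicitly. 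I have checked the cancellation you describe: the only denominator that occurs is $1/m$, so nothing degenerates at $m=-2$, and your argument therefore not only reproves the theorem but removes the exceptional case $m=-2$ that Remark \ref{remark1.2} flags as an artifact of the paper's method --- a genuine improvement, assuming no error on both our parts. The downstream deductions are also sound and parallel the paper's in content if not in order: $S=0$ together with the displayed identity gives $\iota_X dX=0$, hence $\nabla_X X=0$ and, by antisymmetry of $\nabla X$, $|X|=\mathrm{const}$, after which the Bochner formula yields $|dX|^2=4|X|^2\left(\lambda+\frac1m|X|^2\right)$, recovering \eqref{eq1.3}; the paper instead obtains $|X|=\mathrm{const}$ by integrating \eqref{eq2.28} against $|X|^2$ and $|dX|=\mathrm{const}$ from \eqref{eq2.13}.
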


\begin{remark}\label{remark1.2}
The exceptional case of $m=-2$ appears to be an artifact of our method of proof, but we have not found a way to remove it. However, Proposition \ref{proposition2.4} shows that solutions of \eqref{eq1.2} on closed manifolds with $m<0$ and $\divergence X=0$ are Einstein (i.e., $X$ vanishes identically) whenever $\lambda\le 0$.
\end{remark}

When $\divergence X=0$, the (constant) norms of $X$ and $dX$ are related in a simple way. If $|X|^2=:c^2$, the relation is
\begin{equation}
\label{eq1.3}
|dX|^2=4c^2 \left ( \lambda +\frac{c^2}{m}\right ).
\end{equation}
One interesting consequence is that for $\lambda<0$ and $m>0$, we see from this relation that $|X|^2\ge -m\lambda>0$. Then there is no curve of $\lambda<0$ quasi-Einstein metrics with $\divergence X=0$, passing through an Einstein metric $(M,g_0)$, such that $X$ is continuous along the family at $(M,g_0)$ (unless $X$ remains zero along the curve).

%Given the example of left-invariant quasi-Einstein metrics on homogeneous spaces, one might have hoped for greater rigidity, in the sense of a large isometry group. But the examples in the Appendix have only a 2-dimensional isometry group.
Theorem \ref{theorem1.1} imposes limits on both the topology and the geometry of quasi-Einstein metrics in the incompressible case when $X$ is not identically zero. We now list such results.

\begin{corollary}\label{corollary1.3}
Under the assumptions of Theorem \ref{theorem1.1}, $M$ has vanishing Euler characteristic. In particular, $M$ cannot be homeomorphic to an even-dimensional sphere.
\end{corollary}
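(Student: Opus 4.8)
The plan is to observe that Theorem \ref{theorem1.1} already does all the geometric work, and that what remains is a direct appeal to the Poincar\'e--Hopf theorem. The key point is that Theorem \ref{theorem1.1} asserts that $X$ has constant norm, and that $X$ is not identically zero. Writing $|X|^2 = c^2$ as in \eqref{eq1.3}, the hypothesis that $X$ does not vanish identically forces $c \neq 0$, so $|X|^2 = c^2 > 0$ everywhere on $M$. In particular $X$ cannot vanish at any point of $M$: it is a smooth \emph{nowhere-vanishing} vector field on the closed connected manifold $M$.

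I would then invoke the Poincar\'e--Hopf index theorem. On a closed manifold, the sum of the indices of the zeros of any vector field with isolated zeros equals the Euler characteristic $\chi(M)$. Since $X$ has no zeros at all, this sum is empty and we conclude $\chi(M) = 0$. For the final assertion, I would simply recall that the even-dimensional sphere $S^{2k}$ satisfies $\chi(S^{2k}) = 2 \neq 0$; hence $M$ cannot be homeomorphic (indeed not even homotopy equivalent) to any even-dimensional sphere. I do not anticipate any genuine obstacle here: the only subtle point is ensuring that constant positive norm really does rule out zeros, which is immediate, and that the remainder is a textbook topological consequence once Theorem \ref{theorem1.1} is in hand.
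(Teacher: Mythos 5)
Your proposal is correct and follows essentially the same route as the paper: Theorem \ref{theorem1.1} gives constant (hence, since $X\not\equiv 0$, nowhere-zero) norm, Poincar\'e--Hopf then forces $\chi(M)=0$, and $\chi(S^{2k})=2$ rules out even-dimensional spheres. No gaps.
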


\begin{corollary}\label{corollary1.4}
Under the assumptions of Theorem \ref{theorem1.1}, we have the following.
\begin{itemize}
\item [i)] The Ricci tensor has precisely two distinct eigenvectors at each point, with multiplicities $1$ and $n-1$, and the dimension of $M$ must be $n\ge 3$.
\item [ii)] The eigenvector of multiplicity $1$ has nonnegative eigenvalue. If this eigenvalue is zero, then $dX=0$ and if $m>0$ (equivalently,\footnote
{These are equivalent because the eigenvalue can vanish only if $m\lambda<0$.}
if $\lambda<0$) then $(M,g)$ splits isometrically as $N\times {\mathbb S}^1$ where $N$ is negative Einstein.
\item [iii)] Conversely, if this eigenvalue is nonzero, then $dX$ is nonvanishing, so $(M,g,X)$ is not static and does not split isometrically along $X$.
\item [iv)] Let $C$ denote the Cotton tensor of $(M,g)$. Then $C=0 \Leftrightarrow dX=0 \Leftrightarrow |X|^2=-m\lambda$. Hence if either $(M,g)$ is locally conformally flat, or $n\ge 4$ and $(M,g)$ has harmonic Weyl curvature,\footnote
{We remind the reader that harmonic Weyl curvature means that the Weyl curvature tensor is divergence-free (on any index). Local conformal flatness for $n\ge 4$ implies zero Weyl curvature, which is a subcase of harmonic Weyl curvature.}
then $X$ is parallel, and if in addition $m>0$ (equivalently, if $\lambda<0$) then $(M,g)$ splits isometrically as $\Sigma\times {\mathbb S}^1$ where $\Sigma$ is a negative Einstein $(n-1)$-manifold.
\end{itemize}
\end{corollary}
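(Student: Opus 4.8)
The plan is to push the conclusions of Theorem~\ref{theorem1.1} through the field equation~\eqref{eq1.2}. Since that theorem gives $\pounds_X g=0$ with $|X|^2=c^2$ constant, and $X\not\equiv 0$ forces $c\neq 0$, the field $X$ is nowhere vanishing and \eqref{eq1.2} reduces to
\[
\ric=\lambda g+\frac{1}{m}\,X\otimes X.
\]
I would read part (i) straight off this: contracting with $X$ gives $\ric(X,\cdot)=(\lambda+c^2/m)X^\flat$, while contracting with any $v\perp X$ gives $\ric(v,\cdot)=\lambda v^\flat$. Hence $\ric$ has exactly the eigenvalue $\mu_1:=\lambda+c^2/m$ on the line $\mathbb{R}X$ (multiplicity $1$) and the eigenvalue $\lambda$ on $X^\perp$ (multiplicity $n-1$); these are distinct because $\mu_1-\lambda=c^2/m\neq 0$. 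The dimension restriction $n\ge 3$ is then automatic, since in dimension two $\pounds_X g=0$ already forces $X=0$.

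For (ii) and (iii) the main tool is \eqref{eq1.3}, which in the present notation reads $|dX|^2=4c^2\mu_1$. As $c\neq 0$ this immediately yields $\mu_1\ge 0$, with $\mu_1=0\iff dX=0$. This is precisely the alternative in (iii) (a nonzero eigenvalue forces $dX\neq 0$, so $X$ is not closed and $(M,g,X)$ is neither static nor split along $X$) together with the first assertion of (ii). When $\mu_1=0$ I would combine $dX=0$ with Killing's equation: the antisymmetric and symmetric parts of $\nabla X$ both vanish, so $\nabla X=0$ and $X$ is parallel. A nowhere-zero parallel field of constant length produces a de~Rham splitting of the universal cover as $\mathbb{R}\times\tilde N$ with $X=\partial_s$, and by the eigenvalue computation in (i) the leaf metric satisfies $\ric_N=\lambda g_N$. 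When $m>0$ the relation $\mu_1=0$ reads $\lambda=-c^2/m<0$, so $N$ is negative Einstein.

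Part (iv) rests on the observation that $R=n\lambda+c^2/m$ is \emph{constant}, so the scalar-curvature terms in the Cotton tensor drop and $C_{ijk}=\nabla_k R_{ij}-\nabla_j R_{ik}$. Substituting $R_{ij}=\lambda g_{ij}+\tfrac1m X_iX_j$ and using that $\nabla_a X_b=\tfrac12(dX)_{ab}$ is antisymmetric expresses $C$ as a linear combination of terms of the form $X\otimes dX$; in particular $dX=0\Rightarrow C=0$. For the converse I would exploit that constancy of $|X|^2$ gives $\iota_X dX=0$ (indeed $\nabla_k|X|^2=X^i(dX)_{ki}=0$): contracting $C$ with $X$ then annihilates all but one term and leaves $X^iC_{ijk}=\tfrac{c^2}{m}(dX)_{kj}$, so $C=0\Rightarrow dX=0$. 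The remaining equivalence $dX=0\iff|X|^2=-m\lambda$ is just $\mu_1=0$ rewritten. Finally, local conformal flatness forces $C=0$ (directly in $n=3$, and through $\weyl=0$ for $n\ge 4$), and for $n\ge 4$ harmonic Weyl curvature is equivalent to $C=0$ by the standard identity relating $\divergence\weyl$ to the Cotton tensor; in either case $dX=0$, so $X$ is parallel and the splitting from (ii) applies.

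The forward implications---(i), (iii), and $dX=0\Rightarrow C=0$---are routine algebra from the reduced equation and \eqref{eq1.3}. I expect two genuine difficulties. First, recovering $dX=0$ from $C=0$ hinges on the contraction-with-$X$ identity $X^iC_{ijk}=\tfrac{c^2}{m}(dX)_{kj}$, which works only because $\iota_X dX=0$; I would verify carefully that no further degeneracy is hidden in the algebra of $C$. Second, and more seriously, upgrading the de~Rham splitting to a genuine \emph{global} product $N\times\mathbb{S}^1$---rather than a mapping torus or only a finite cover---is delicate, because the flow of a parallel field need not be periodic. I would isolate this as a separate lemma, either passing to a finite cover or arguing that the monodromy of the parallel line field is trivial.
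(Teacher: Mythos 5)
Your argument follows the paper's proof almost step for step. Part (i) is the same eigenvalue computation from the reduced equation $\ric=\lambda g+\frac1m X\otimes X$, with the same disposal of $n=2$. Your use of \eqref{eq1.3} in (ii)--(iii) is equivalent to the paper's route via the Bochner formula for Killing fields: since $\pounds_Xg=0$ gives $|\nabla X|^2=\frac14|dX|^2$, the relation $|dX|^2=4c^2\mu_1$ is literally the identity $0=\frac12\Delta(|X|^2)=|\nabla X|^2-\ric(X,X)=|\nabla X|^2-c^2\mu_1$ that the paper uses, so the conclusions $\mu_1\ge0$ and $\mu_1=0\Leftrightarrow dX=0\Leftrightarrow X$ parallel come out identically. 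Your Cotton-tensor computation in (iv), including the key contraction with $X$ using $\iota_XdX=0$ and $\nabla_XX=0$ to get $C=0\Rightarrow dX=0$, reproduces the paper's equations (2.34)--(2.35) up to normalization conventions, and the reduction of local conformal flatness and harmonic Weyl curvature to $C=0$ is handled the same way.

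The one place where your proposal falls short is the point you flag yourself: the global isometric splitting $N\times{\mathbb S}^1$. A nowhere-zero parallel field on a closed manifold gives, via de Rham, only a splitting of the universal cover --- downstairs one gets a mapping-torus/flat-bundle structure whose monodromy need not be trivial --- so the route you sketch genuinely does not terminate without further input. The paper does not attempt this argument: once $\nabla X=0$ it observes that $(M,g,X)$ is a compact \emph{static} quasi-Einstein manifold with $\lambda<0$ and invokes Wylie's rigidity theorem \cite{Wylie}, which supplies exactly the product decomposition $N\times{\mathbb S}^1$ with $N$ negative Einstein. The correct fix is therefore to cite that theorem (whose proof uses the quasi-Einstein structure, not merely the existence of a parallel field) rather than to try to kill the monodromy by hand; with that substitution your proof is complete and coincides with the paper's.
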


While the eigenvalues of the Ricci tensor are constant, the eigenspaces are not parallelly propagated unless $dX=0$.

The next result collects some consequences when $n:=\dim M =4$.

\begin{corollary}\label{corollary1.5}
Under the assumptions of Theorem \ref{theorem1.1}, if $\dim M=4$ then the Weyl tensor of $(M,g,X)$ obeys
\begin{equation}
\label{eq1.4}
\int_M \left \vert \weyl \right \vert^2 dV = \frac4{3m^2} \left [ \left ( |X|^2\right )^2 -m\lambda |X|^2 -2m^2\lambda^2\right ]^2 \vol(M).
\end{equation}
Then
\begin{itemize}
\item [i)] if $m\lambda>0$ then $|X|^2\ge 2m\lambda$ and $(M,g)$ is conformally flat if and only if $|X|^2= 2m\lambda$.
\item [ii)] if $\lambda=0$, then $\int_M \left \vert \weyl \right \vert^2 dV=\frac{4}{3m}\left ( |X|^2\right )^2\vol(M) >0$ and $(M,g)$ is not locally conformally flat.
\item [iii)] if $m\lambda<0$ then $|X|^2\ge -m\lambda$. Moreover, $(M,g)$ is locally conformally flat if and only if $|X|^2= -m\lambda$ and then it splits as $\Sigma\times {\mathbb S}^1$ where $\Sigma$ is compact hyperbolic.
\end{itemize}
In any case, $b_1(M)\ge 1$ and so $M$ is not simply connected
\end{corollary}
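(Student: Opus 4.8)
The plan is to prove Corollary \ref{corollary1.5} by specializing the machinery already established in Theorem \ref{theorem1.1} and Corollary \ref{corollary1.4} to dimension four, where the Gauss--Bonnet--Chern theorem and the algebra of the Weyl tensor become especially rigid. The central identity \eqref{eq1.4} must come from an integral Gauss--Bonnet computation: in four dimensions the Chern--Gauss--Bonnet integrand can be written as a universal polynomial in the Weyl norm, the traceless Ricci norm, and the scalar curvature, so $\int_M |\weyl|^2\,dV$ is expressible through $\int_M |\tf\ric|^2\,dV$, $\int_M \scal^2\,dV$, and the (topological) Euler characteristic. But Corollary \ref{corollary1.3} tells us $\chi(M)=0$, which kills the purely topological term; meanwhile Theorem \ref{theorem1.1} guarantees $\pounds_X g=0$ and that $|X|^2=:c^2$ is constant, so by the Killing condition the Ricci tensor is fully determined pointwise by \eqref{eq1.2}, giving $\ric=\lambda g+\frac1m X\otimes X$. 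From this explicit form I would compute $\scal$ and $|\tf\ric|^2$ as constants (functions of $c^2,\lambda,m$), plug into the Gauss--Bonnet relation, and read off the right-hand side of \eqref{eq1.4}.

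First I would carry out the pointwise Ricci computation. Since $X$ is Killing and $|X|^2=c^2$, the Ricci tensor is $\lambda g + \frac1m X\otimes X$; its trace is $\scal = n\lambda + \frac{c^2}{m} = 4\lambda + \frac{c^2}{m}$. The traceless part is $\tf\ric = \frac1m\big(X\otimes X - \frac{c^2}{4} g\big)$, whose squared norm is $\frac1{m^2}\big(c^4 - \frac{c^4}{4}\big)=\frac{3c^4}{4m^2}$, using $|X\otimes X|^2=c^4$ and $|g|^2=4$ with $\langle X\otimes X, g\rangle = c^2$. Next I would invoke the four-dimensional Gauss--Bonnet identity $8\pi^2\chi(M)=\int_M\big(\frac14|\weyl|^2 - \frac12|\tf\ric|^2 + \frac1{24}\scal^2\big)\,dV$, set $\chi=0$, and solve for $\int_M|\weyl|^2\,dV = \int_M\big(2|\tf\ric|^2 - \frac16\scal^2\big)\,dV$. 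Substituting the constants and simplifying the algebra into the square bracket form should reproduce \eqref{eq1.4}; I expect the factor $\frac4{3m^2}$ to emerge from collecting the $c^4$, $m\lambda c^2$, and $m^2\lambda^2$ terms into a perfect square.

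With \eqref{eq1.4} in hand, items (i)--(iii) follow by analyzing when the square bracket, hence the integral, vanishes, since local conformal flatness in dimension four is equivalent to $\weyl=0$, i.e. to $\int_M|\weyl|^2\,dV=0$. Setting the bracket $P(c^2):=(c^2)^2 - m\lambda c^2 - 2m^2\lambda^2$ to zero and treating it as a quadratic in $u:=c^2$ gives roots $u=2m\lambda$ and $u=-m\lambda$; together with the sign constraints on $c^2$ already available (for instance $c^2\ge -m\lambda$ from \eqref{eq1.3} when $m\lambda<0$, and the multiplicity-one Ricci eigenvalue $\lambda+\frac{c^2}{m}$ being nonnegative from Corollary \ref{corollary1.4}(ii)), I would determine in each sign regime which root is admissible, yielding the stated thresholds $|X|^2\ge 2m\lambda$ and $|X|^2\ge -m\lambda$. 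The conformally flat case $|X|^2=-m\lambda$ forces $dX=0$ by Corollary \ref{corollary1.4}(iv), and then Corollary \ref{corollary1.4}(ii)/(iv) supplies the isometric splitting $\Sigma\times\mathbb S^1$ with $\Sigma$ negative Einstein, which in the three-dimensional factor (since $n=4$) means $\Sigma$ is hyperbolic. The case $\lambda=0$ in (ii) is immediate from $P(c^2)=c^4$ and the fact that $c^2>0$ since $X\not\equiv0$.

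The final claim $b_1(M)\ge1$ follows because $X$ is a nontrivial Killing field of constant norm whose integral curves are complete geodesics (Theorem \ref{theorem1.1}); the dual one-form $X^\flat$ is then closed (a unit-speed-up to scale, geodesic Killing field is harmonic), hence represents a class in $H^1_{\mathrm{dR}}(M)$, and I would check this class is nonzero — for a closed Killing field of constant length on a closed manifold, a nonzero class is forced because a nowhere-vanishing harmonic one-form cannot be exact. I expect the main obstacle to be the bookkeeping in the Gauss--Bonnet substitution, namely verifying that the combination $2|\tf\ric|^2-\frac16\scal^2$ collapses exactly to $\frac4{3m^2}P(c^2)^2$; this requires that the mixed terms conspire into a perfect square, and any constant-factor slip there would propagate into the thresholds in (i)--(iii), so I would double-check the normalization conventions for $|\weyl|^2$ and the Gauss--Bonnet integrand before trusting the coefficients.
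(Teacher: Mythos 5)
Your derivation of the integral identity is essentially the paper's own: Gauss--Bonnet--Chern in dimension four, $\chi(M)=0$ from Corollary \ref{corollary1.3}, and the explicit Ricci tensor $\ric=\lambda g+\frac1m X\otimes X$ with $|X|^2=c^2$ constant from Theorem \ref{theorem1.1}. Your arithmetic is right: $|\tf\ric|^2=\frac{3c^4}{4m^2}$, $R=4\lambda+\frac{c^2}{m}$, and $2|\tf\ric|^2-\frac16R^2=\frac{4}{3m^2}\left(c^4-m\lambda c^2-2m^2\lambda^2\right)$. Note that this bracket does \emph{not} collapse into a perfect square --- it factors as $(c^2-2m\lambda)(c^2+m\lambda)$ --- so the outer square in the printed \eqref{eq1.4} is inconsistent with the derivation (the paper's own equation \eqref{eq2.38} likewise yields the unsquared bracket). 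Do not force your computation to match the squared form: the unsquared version is the one for which the sign analysis in (i)--(iii) makes sense, since with an outer square the right-hand side would be automatically nonnegative and no constraint on $|X|^2$ would follow. With the unsquared formula, your root analysis of $P(u)=u^2-m\lambda u-2m^2\lambda^2$ together with $\int_M|\weyl|^2\,dV\ge0$ and $c^2>0$ gives exactly the thresholds and the conformal-flatness equivalences of (i)--(iii), matching the paper.

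There is, however, a genuine gap in your argument for $b_1(M)\ge1$. You assert that $X^\flat$ is closed because ``a geodesic Killing field of constant length is harmonic.'' This is false: such a field is co-closed ($\divergence X=0$), but closedness would require $dX=0$, which by \eqref{eq1.3} holds only in the special case $|X|^2=-m\lambda$. In general $dX\ne0$ here (that is the whole point of the non-static case), so $X^\flat$ defines no de~Rham class; the Hopf field on the round ${\mathbb S}^3$ is a unit-length geodesic Killing field on a simply connected manifold and shows the general principle you invoke cannot hold. The correct argument, and the one the paper uses, is purely topological and specific to dimension four: since $M$ is closed, connected, and orientable, Poincar\'e duality gives $\chi(M)=2-2b_1(M)+b_2(M)$, and $\chi(M)=0$ with $b_2(M)\ge0$ forces $b_1(M)\ge1$. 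You should replace your harmonic-form argument with this one.
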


Theorem \ref{theorem1.1} is itself a corollary of a general integral identity for solutions of \eqref{eq1.2} on closed manifolds. This identity is given in the following proposition, which holds with no assumption on $\divergence X$.

\begin{proposition}\label{proposition1.6}
Let $(M,g,X)$ obey equation \eqref{eq1.2} with $m\neq 0, -2$ on a closed manifold $M$. Then
\begin{equation}
\label{eq1.5}
\int_M \left ( \divergence X -\frac{1}{m} |X|^2\right )^2 dV = \frac12 \int_M \left \vert \pounds_X g\right \vert^2 dV + \frac{1}{m^2}\int_M \left ( |X|^2\right )^2 dV.
\end{equation}
\end{proposition}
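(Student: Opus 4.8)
The plan is to reduce the identity to a Bochner-type formula for $X$ and then feed in the field equation only once. Tracing \eqref{eq1.2} gives $R + \divergence X - \tfrac1m|X|^2 = n\lambda$, so the integrand on the left of \eqref{eq1.5}, namely $\divergence X - \tfrac1m|X|^2 = n\lambda - R$, is (up to a constant) minus the scalar curvature. Setting $\mu := \divergence X - \tfrac1m|X|^2$ and squaring,
\begin{equation*}
\int_M \mu^2\,dV = \int_M (\divergence X)^2\,dV - \frac2m\int_M (\divergence X)\,|X|^2\,dV + \frac1{m^2}\int_M\bigl(|X|^2\bigr)^2\,dV .
\end{equation*}
The last term already matches the right side of \eqref{eq1.5}, so the proposition is equivalent to the single identity
\begin{equation*}
\int_M (\divergence X)^2\,dV - \frac2m\int_M(\divergence X)\,|X|^2\,dV = \frac12\int_M\bigl|\pounds_X g\bigr|^2\,dV . \tag{$\ast$}
\end{equation*}

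First I would record the purely Riemannian identity, valid for \emph{any} vector field on a closed manifold, obtained by integrating by parts twice and commuting derivatives (which produces a Ricci term):
\begin{equation*}
\frac12\int_M\bigl|\pounds_X g\bigr|^2\,dV = \int_M |\nabla X|^2\,dV + \int_M(\divergence X)^2\,dV - \int_M \ric(X,X)\,dV .
\end{equation*}
This uses $|\pounds_X g|^2 = 2|\nabla X|^2 + 2\,\nabla_iX_j\nabla^jX^i$ together with $\int_M \nabla_iX_j\nabla^jX^i\,dV = \int_M(\divergence X)^2\,dV - \int_M\ric(X,X)\,dV$. Subtracting the common $\int_M(\divergence X)^2\,dV$, identity $(\ast)$ collapses to
\begin{equation*}
\int_M \ric(X,X)\,dV - \int_M |\nabla X|^2\,dV = \frac2m\int_M (\divergence X)\,|X|^2\,dV , \tag{$\ast\ast$}
\end{equation*}
which is the only place the field equation is needed.

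To prove $(\ast\ast)$ I would take the divergence of \eqref{eq1.2}. Using the contracted second Bianchi identity $\nabla^i R_{ij} = \tfrac12\nabla_j R$ and the Ricci identity $\nabla^i\nabla_j X_i = \nabla_j(\divergence X) + \ric_{jk}X^k$, the divergence becomes a pointwise vector equation. Contracting it with $X^j$ and integrating over the closed manifold converts every term into one of $\int_M|\nabla X|^2\,dV$, $\int_M(\divergence X)^2\,dV$, $\int_M\ric(X,X)\,dV$, $\int_M R\,\divergence X\,dV$, and $\int_M(\divergence X)|X|^2\,dV$; here closedness is essential, as it forces all exact divergences (in particular $\int_M\divergence X\,dV=0$) to vanish. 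Substituting the traced relation $R = n\lambda - \divergence X + \tfrac1m|X|^2$ for the surviving $\int_M R\,\divergence X\,dV$ term makes the $(\divergence X)^2$ contributions cancel, leaving exactly $(\ast\ast)$, and combining with the two displays of the previous paragraph yields \eqref{eq1.5}.

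The main bookkeeping hazard is the divergence step: one must track the sign and placement of the Ricci contributions coming from both the contracted Bianchi identity and the commutator $[\nabla^i,\nabla_j]X_i$, and must correctly pair the two $\nabla_j(\divergence X)$ terms (one from differentiating $\tfrac12\pounds_X g$, one implicit in the gradient of the trace) so that they reinforce rather than conflict. In this organization no step appears to require $m\neq -2$; consistent with Remark \ref{remark1.2}, that exclusion seems to be an artifact of an alternative route in which one eliminates $\ric(X,X)$ by re-substituting \eqref{eq1.2} rather than invoking the generic identity above, forcing a division by an $m$-dependent factor.
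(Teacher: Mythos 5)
Your argument is correct, and it takes a genuinely different route from the paper's. The paper derives two pointwise divergence identities (Lemmas \ref{lemma2.3} and \ref{lemma2.5}, obtained by substituting \eqref{eq1.2} back in for $R_{ij}$ after taking the divergence), combines them with the coefficient $\tfrac1m+\tfrac14$, integrates, and is then forced to divide out a prefactor $1+\tfrac2m$ in \eqref{eq2.26} --- which is the sole source of the hypothesis $m\neq -2$. You instead split off the universal identity $\tfrac12\int_M|\pounds_Xg|^2\,dV=\int_M|\nabla X|^2\,dV+\int_M(\divergence X)^2\,dV-\int_M\ric(X,X)\,dV$, valid for any vector field on a closed manifold, so that the proposition reduces to your $(\ast\ast)$; and $(\ast\ast)$ is exactly the integrated $X$-contraction of the divergence of \eqref{eq1.2} \emph{provided one keeps $\ric(X,X)$ intact} rather than eliminating it via \eqref{eq1.2} as the paper does in passing from \eqref{eq2.10} to \eqref{eq2.11}. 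I have checked the bookkeeping you flag as the hazard: contracting the paper's \eqref{eq2.10} with $X^j$ makes the two $\nabla_X(|X|^2)$ terms cancel, and integration gives $\tfrac12\int_M\ric(X,X)\,dV-\tfrac12\int_M|\nabla X|^2\,dV=\tfrac1m\int_M|X|^2\divergence X\,dV$, which is $(\ast\ast)$; combined with the universal identity this yields \eqref{eq1.5} exactly. What your route buys is significant: it is shorter, it avoids Lemma \ref{lemma2.5} entirely, and --- as you observe --- it nowhere requires $m\neq-2$, so it removes the exceptional case that Remark \ref{remark1.2} describes as an unremovable artifact (and since the remainder of the proof of Theorem \ref{theorem1.1} only uses \eqref{eq2.24} and \eqref{eq2.13}, neither of which involves division by $m+2$, the exclusion would propagate out of the main theorem as well). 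The only cost is that your final step is stated as an outline rather than written out term by term, but every claimed cancellation does occur.
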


Several integral formulae for quasi-Einstein metrics appear in \cite{BR}. That paper deals mainly---though not entirely---with the gradient case $X=df$, so that $X$ is an exact form and hence closed. When $X$ is closed, greater rigidity can be expected; cf \cite{BGKW}. We note that in the limit $m\to \infty$, which is known as the Ricci soliton case, Perelman's theorem implies that a compact Ricci soliton is a gradient soliton so that $X=df$, and we obtain from \eqref{eq1.5} the formula
\begin{equation}
\label{eq1.6}
\int_M \left ( \Delta f\right )^2 dV =2\int_M |\hess f|^2 dV,
\end{equation}
which previously appeared in \cite[Theorem 2]{ABR}.

%%EW: If X=df then Div X =0 => f is hamonic. Warped product application?

Equation \eqref{eq1.5} governs the $L^2$-norm of the symmetrized part of $\nabla X$. A second integral formula governing the $L^2$-norm of the antisymmetrized part can also be obtained, as the following proposition shows, again with no assumption on $\divergence X$.

\begin{proposition}\label{proposition1.7}
Let $(M,g,X)$ obey equation \eqref{eq1.2}with $m\neq 0, -2, -4$ on a closed manifold $M$. Let $dX(\partial_i,\partial_j):=\nabla_i X_j - \nabla_j X_i$. Then
\begin{equation}
\label{eq1.7}
\begin{split}
&\, \int_M \left ( \lambda +\frac{2}{m} |X|^2\right )^2 dV\\
=&\, \lambda^2\vol(M) +\int_M \left [ \left ( \frac12 -\frac1m \right )\left \vert \pounds_X g\right \vert^2 +\frac1m \left \vert dX\right \vert^2 +\left ( \frac4m -1\right ) (\divergence X)^2 \right ] dV .
\end{split}
\end{equation}

\end{proposition}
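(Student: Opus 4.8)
The plan is to derive \eqref{eq1.7} by writing $\int_M \ric(X,X)\,dV$ in two different ways and then eliminating a cross term with the help of Proposition \ref{proposition1.6}; this mirrors the derivation of \eqref{eq1.5}, the new ingredient being an integrated Bochner identity that exposes the antisymmetric part $dX$. I will use the pointwise splitting $\nabla_i X_j = \frac12 (\pounds_X g)_{ij} + \frac12 (dX)_{ij}$, which gives $4|\nabla X|^2 = |\pounds_X g|^2 + |dX|^2$ and $|\pounds_X g|^2 - |dX|^2 = 4\,\nabla_i X_j \nabla^j X^i$. The first step is to integrate the Bochner identity for the $1$-form $X$: integration by parts together with the Ricci commutator yields the standard formula $\int_M \nabla_i X_j \nabla^j X^i\, dV = \int_M\big[(\divergence X)^2 - \ric(X,X)\big]\,dV$, whose sign is fixed by the Killing case, where the left-hand side equals $-\int_M|\nabla X|^2\,dV$. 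Combined with the splitting this gives
\begin{equation*}
\int_M \ric(X,X)\, dV = \int_M \Big[ (\divergence X)^2 - \tfrac14 |\pounds_X g|^2 + \tfrac14 |dX|^2 \Big]\, dV .
\end{equation*}

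For the second expression I contract \eqref{eq1.2} with $X \otimes X$. Because $(\pounds_X g)(X,X) = X^i \nabla_i |X|^2 = X(|X|^2)$, equation \eqref{eq1.2} gives the pointwise identity $\ric(X,X) = \lambda |X|^2 - \frac12 X(|X|^2) + \frac1m |X|^4$, and integrating, with $\int_M X(|X|^2)\,dV = -\int_M (\divergence X)|X|^2\,dV$, produces
\begin{equation*}
\int_M \ric(X,X)\, dV = \int_M \Big[ \lambda |X|^2 + \tfrac12 (\divergence X)|X|^2 + \tfrac1m |X|^4 \Big]\, dV .
\end{equation*}
Equating the two expressions leaves a single relation among $\int_M |\pounds_X g|^2$, $\int_M |dX|^2$, $\int_M (\divergence X)^2$, $\int_M |X|^2$, $\int_M |X|^4$, and the cross term $\int_M (\divergence X)|X|^2\,dV$. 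To eliminate the cross term I would expand \eqref{eq1.5}; after cancelling the $\frac{1}{m^2}|X|^4$ contributions it reads $\int_M (\divergence X)|X|^2\,dV = \frac m2 \int_M\big[(\divergence X)^2 - \frac12 |\pounds_X g|^2\big]\,dV$. Substituting this and multiplying through by $\frac4m$ assembles the right-hand side into exactly $(\frac12 - \frac1m)|\pounds_X g|^2 + \frac1m |dX|^2 + (\frac4m - 1)(\divergence X)^2$ and the left-hand side into $(\lambda + \frac2m |X|^2)^2 - \lambda^2$, which is \eqref{eq1.7}.

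The difficulty here is bookkeeping rather than anything conceptual: the one delicate point is fixing the sign of the curvature commutator in the Bochner step, after which the result follows by a routine linear elimination of $\int_M(\divergence X)|X|^2\,dV$. The division by $m$ at the end, together with the invocation of Proposition \ref{proposition1.6}, accounts for the hypotheses $m \neq 0, -2$. The further exclusion $m \neq -4$ in the statement does not appear to be required by the route sketched above; it is presumably an artifact of carrying out the computation without appeal to \eqref{eq1.5}, much as the excluded value $m=-2$ is identified as an artifact of the method in Remark \ref{remark1.2}.
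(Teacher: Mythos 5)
Your argument is correct, and it is genuinely different from the paper's. The paper derives \eqref{eq1.7} by taking the double divergence of \eqref{eq1.2} (Lemma \ref{lemma2.5}, equation \eqref{eq2.23}), multiplying by $-\tfrac2m$, integrating, and then adding a multiple of \eqref{eq2.26}; the final step removes a common factor of $\bigl(\tfrac14+\tfrac1m\bigr)$, which is precisely where the hypothesis $m\neq -4$ enters. You instead compute $\int_M \ric(X,X)\,dV$ twice --- once via the integrated Bochner/Ricci-commutator identity $\int_M \nabla_i X_j\nabla^jX^i\,dV=\int_M\bigl[(\divergence X)^2-\ric(X,X)\bigr]dV$ together with $\nabla_iX_j\nabla^jX^i=\tfrac14\bigl(|\pounds_Xg|^2-|dX|^2\bigr)$, and once by contracting \eqref{eq1.2} with $X\otimes X$ --- and then eliminate the cross term $\int_M|X|^2\divergence X\,dV$ using the expanded form of \eqref{eq1.5}. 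I have checked each step (the sign of the commutator term, the identity $(\pounds_Xg)(X,X)=\nabla_X(|X|^2)$, the elimination $\int_M|X|^2\divergence X\,dV=\tfrac m2\int_M\bigl[(\divergence X)^2-\tfrac12|\pounds_Xg|^2\bigr]dV$, and the final multiplication by $\tfrac4m$), and the bookkeeping does assemble into \eqref{eq1.7} exactly. What your route buys is twofold: it is conceptually lighter (no double-divergence computation, only the standard Bochner identity plus Proposition \ref{proposition1.6} used as a black box), and it genuinely does not need $m\neq-4$ --- only $m\neq 0,-2$ --- so you have in fact confirmed that the exclusion of $m=-4$ in the statement is an artifact of the paper's method, analogous to the situation for $m=-2$ discussed in Remark \ref{remark1.2}.
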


This formula, which reduces to equation \eqref{eq1.3} when $\divergence X=0$ and (concomitantly) $|X|^2=const$ and $|dX|^2=const$, has some interesting consequences. In the context of Theorem \ref{theorem1.1}, we have the following corollary, which does not assume that $\divergence X=0$ but instead assumes that $|X|^2=-m\lambda>0$.

\begin{corollary}\label{corollary1.8}
Let $M$ be a closed manifold, let $m$ and $\lambda$ be constants of opposite sign such that $(M,g,X)$ obeys \eqref{eq1.2} with $|X|^2=-m\lambda$.
\begin{itemize}
\item [i)] If $2<m\le 4$ then $X$ is parallel and thus static, and $(M,g)$ splits isometrically as $N\times {\mathbb S}^1$ where $N$ is negative Einstein.
\item [ii)] If $m=2$ then $X$ is closed and incompressible, and if $n=4$ then $(M,g)$ splits isometrically as $N\times {\mathbb S}^1$ where $N$ is negative Einstein.
\end{itemize}
\end{corollary}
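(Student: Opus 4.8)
The plan is to substitute the hypothesis $|X|^2=-m\lambda$ directly into the integral identity \eqref{eq1.7} of Proposition \ref{proposition1.7} and to read the conclusions off the signs of the three coefficients. The first step is the observation that $\lambda+\frac{2}{m}|X|^2=\lambda+\frac{2}{m}(-m\lambda)=-\lambda$, so the left-hand side of \eqref{eq1.7} collapses to $\lambda^2\vol(M)$ and cancels the leading term on the right. What remains is the pointwise-sign identity
\begin{equation*}
0=\int_M\left[\left(\frac12-\frac1m\right)\left|\pounds_X g\right|^2+\frac1m|dX|^2+\left(\frac4m-1\right)(\divergence X)^2\right]dV,
\end{equation*}
so that the entire argument reduces to tracking which of the three coefficients are nonnegative.

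For part (i), when $2<m\le 4$ one has $\frac12-\frac1m>0$, $\frac1m>0$ and $\frac4m-1\ge 0$, so the integrand is a sum of nonnegative terms and must vanish. The two strictly positive coefficients force $\pounds_X g=0$ and $dX=0$ everywhere; writing $\nabla X$ as the sum of its symmetric part $\tfrac12\pounds_X g$ and its antisymmetric part $\tfrac12 dX$, this gives $\nabla X=0$, i.e.\ $X$ is parallel (and in particular closed, hence static). Tracing $\pounds_X g=0$ also yields $\divergence X=0$, so the hypotheses of Theorem \ref{theorem1.1} and Corollary \ref{corollary1.4} are now in force; since $m>0$ forces $\lambda<0$, the isometric splitting $(M,g)=N\times\mathbb{S}^1$ with $N$ negative Einstein follows from Corollary \ref{corollary1.4}.

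For part (ii), when $m=2$ the coefficient $\frac12-\frac1m$ vanishes, so the $|\pounds_X g|^2$ term drops out entirely and the identity becomes $0=\int_M(\tfrac12|dX|^2+(\divergence X)^2)\,dV$. The surviving coefficients are strictly positive, hence $dX=0$ and $\divergence X=0$: the field $X$ is closed and incompressible, which is the first assertion. For the splitting I would then use that $\divergence X=0$ places $(M,g,X)$ under the assumptions of Theorem \ref{theorem1.1}, and therefore of Corollary \ref{corollary1.5}; with $m\lambda<0$ and $|X|^2=-m\lambda$ we sit exactly at the borderline of part (iii) of that corollary, which in dimension four delivers local conformal flatness together with the splitting $\Sigma\times\mathbb{S}^1$, $\Sigma$ compact hyperbolic and in particular negative Einstein. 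This is precisely where the restriction $n=4$ enters. (I note in passing that applying Theorem \ref{theorem1.1} here also gives $\pounds_X g=0$, hence $X$ parallel, so the splitting in fact persists for all $n\ge 3$; routing through Corollary \ref{corollary1.5} is the option that matches the stated four-dimensional form with a hyperbolic cross-section.)

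The main obstacle I anticipate is not the bookkeeping of coefficients but the upgrade from the infinitesimal conclusions ($X$ parallel in case (i); $X$ closed and incompressible in case (ii)) to a genuine global product. Showing that a closed manifold carrying a nontrivial parallel field splits \emph{as} $N\times\mathbb{S}^1$, rather than merely locally or only after a finite cover, and identifying $N$ as negative Einstein, is the delicate point. I would discharge it by citing the structural results already established in Corollaries \ref{corollary1.4} and \ref{corollary1.5}, so that the work specific to this corollary is confined to the coefficient-sign analysis and to checking that $|X|^2=-m\lambda$ is exactly the threshold value that triggers them.
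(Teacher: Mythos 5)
Your proof is correct and follows essentially the same route as the paper: substitute $|X|^2=-m\lambda$ into \eqref{eq1.7}, observe that the left-hand side collapses to $\lambda^2\vol(M)$, read off the signs of the three coefficients, and then obtain the splitting from Wylie's theorem (the paper cites \cite{Wylie} directly, you route through Corollary \ref{corollary1.4}, which amounts to the same thing) in case (i) and from Corollary \ref{corollary1.5}(iii) in case (ii). Your parenthetical remark that Theorem \ref{theorem1.1} upgrades case (ii) to $X$ parallel, so the splitting would persist beyond $n=4$, is a small strengthening the paper does not record, but the argument for the stated claim is the same.
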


This paper is organized as follows. The proof of Theorem \ref{theorem1.1} is based on Proposition \ref{proposition1.6}, so we prove Proposition \ref{proposition1.6} first. Its proof uses a calculation presented in subsections \ref{subsection2.1} and \ref{subsection2.2}. The proof of Proposition \ref{proposition1.6} is presented at the start of subsection \ref{subsection2.3}, where it is followed by the proof of Theorem \ref{theorem1.1} and the other main results. Appendix \ref{appendix} contains a discussion of nontrivial examples of quasi-Einstein metrics on closed manifolds that are not homogeneous spaces and for which $X$ is incompressible (i.e., $\divergence X=0$) but not closed (so $dX$ does not vanish).

Throughout this paper, we take $M$ to be a closed, connected, orientable manifold.

\subsection*{Acknowledgements} The research of EB was partially supported by a Simons Foundation Grant (\#426628, E. Bahuaud). The research of HK was supported by NSERC grant RGPIN--04887--2018. The research of EW was supported by NSERC grant RGPIN--2022--03440. EW thanks the Fields Institute for hospitality during a visit that facilitated the completion of this work.

\subsection*{Note added} As we were in the final stage of releasing this paper, we became aware of the work \cite{DL}, which shows that every $m=2$ quasi-Einstein metric with nongradient $X$ admits a nontrivial incompressible vector field and, in consequence, a nontrivial Killing vector field. That field does not always equal $X$ and may have isolated zeroes.

\subsection*{Data statement} Data sharing is not applicable to this article as no datasets were generated or analyzed during the current study.

\subsection*{Conflict of interest statement} The authors have no conflicts of interest.

\section{The proofs of the results}
\setcounter{equation}{0}

\noindent The proof is through a series of straightforward calculations which we use to obtain a differential equation governing $|X|^2$. The individual lemmata below are intermediate steps in the calculation.

\subsection{Preliminaries}\label{subsection2.1}

\noindent From equation \eqref{eq1.2} the contracted Bianchi identity gives
\begin{equation} \label{eq2.1}
\nabla^i R_{ij} = \frac{1}{2} \nabla_j R = \frac{1}{2m} \nabla_j \left (|X|^2\right )- \frac{1}{2} \nabla_j \left (\dv X\right ) .
\end{equation}
We will also need that
\begin{equation} \label{eq2.2}
\nabla_j \left ( |X|^2\right ) = 2 X^i \nabla_j X_i,
\end{equation}
and
\begin{equation}
\label{eq2.3}
 \Delta\left ( |X|^2\right ) = 2 |\nabla X|^2+ 2 \left \langle X, \Delta X \right \rangle .
\end{equation}
A useful variant on this will be
\begin{equation} \label{eq2.4}
\left \langle X, \nabla_X X \right \rangle = \frac{1}{2} \nabla_X \left ( |X|^2\right ) .
\end{equation}
We introduce the following notation for the symmetrized and antisymmetrized parts, respectively, of the covariant derivative of $X$.
\begin{equation}
\label{eq2.5}
\begin{split}
S_{ij} :=&\, \pounds_X g_{ij} = \nabla_i X_j +\nabla_j X_i , \\
A_{ij} :=&\, (dX)_{ij} =  \nabla_i X_j -\nabla_j X_i .
\end{split}
\end{equation}
It is easy to check that
\begin{equation}
\label{eq2.6}
\begin{split}
|\nabla X|^2 =&\, \frac{1}{4} \left (|S|^2 + |A|^2\right ) ,\\
\nabla^i X^j \nabla_j X_i =&\, \frac{1}{4} \left ( |S|^2 - |A|^2\right ) ,\\
\left \langle S,A\right \rangle =&\ 0.
\end{split}
\end{equation}

\subsection{Divergence equations}\label{subsection2.2}

\begin{lemma} \label{lemma2.1}
Solutions of \eqref{eq1.2} obey
\begin{equation}
\label{eq2.7}
\begin{split}
0=&\, \frac{1}{2} \Delta X_j  + \frac{\lambda}{2} X_j - \left( \frac{1}{m} + \frac{1}{4} \right) \nabla_X X_j + \frac{1}{2} \left( \frac{1}{m} - \frac{1}{4} \right)\nabla_j \left ( |X|^2\right )\\
&\, + \frac{1}{2m} X_j |X|^2  - \frac{1}{m} X_j \divergence X.
\end{split}
\end{equation}
\end{lemma}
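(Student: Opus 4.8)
The plan is to differentiate the field equation \eqref{eq1.2} once and contract, so that the left-hand side is controlled by the contracted Bianchi identity \eqref{eq2.1} while the right-hand side produces the Laplacian of $X$ together with lower-order terms. Writing \eqref{eq1.2} in the form $R_{ij}=\lambda g_{ij}-\tfrac12(\nabla_iX_j+\nabla_jX_i)+\tfrac1m X_iX_j$, I would apply $\nabla^i$ to both sides. The heart of the argument is then simply to identify each piece and collect coefficients.

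For the left-hand side, \eqref{eq2.1} immediately gives $\nabla^iR_{ij}=\tfrac1{2m}\nabla_j(|X|^2)-\tfrac12\nabla_j(\divergence X)$. On the right-hand side three terms appear. The symmetric derivative splits into $\nabla^i\nabla_iX_j=\Delta X_j$ and $\nabla^i\nabla_jX_i$; the latter I would reorder using the Ricci identity for a $1$-form, which upon contraction produces exactly $\nabla_j(\divergence X)+R_{jl}X^l$, the full Riemann tensor collapsing to the Ricci tensor after the trace. The quadratic term expands by the Leibniz rule as $\nabla^i(X_iX_j)=(\divergence X)X_j+\nabla_XX_j$. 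A pleasant feature is that the $\nabla_j(\divergence X)$ contributions on the two sides cancel, so that no hypothesis on $\divergence X$ is needed and the identity holds in the full generality stated.

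The only surviving curvature term is then $R_{jl}X^l$, which I would eliminate by contracting the field equation \eqref{eq1.2} itself with $X^l$. Using \eqref{eq2.2} to write $X^l\nabla_jX_l=\tfrac12\nabla_j(|X|^2)$ and recognizing $X^l\nabla_lX_j=\nabla_XX_j$, this yields $R_{jl}X^l=\lambda X_j-\tfrac14\nabla_j(|X|^2)-\tfrac12\nabla_XX_j+\tfrac1m X_j|X|^2$. Substituting this back and collecting the coefficients of $\Delta X_j$, $X_j$, $\nabla_XX_j$, $\nabla_j(|X|^2)$, $X_j|X|^2$, and $X_j\divergence X$ produces \eqref{eq2.7}.

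The computation is essentially mechanical, and the fact that the curvature enters only through the Ricci tensor keeps it self-contained. The one place demanding care is the sign and contraction convention in the Ricci identity, since an error there would corrupt the coefficients of $\lambda X_j$ and of $\nabla_XX_j$; I expect that, together with the routine bookkeeping of the several rational coefficients, to be the main (and only minor) obstacle.
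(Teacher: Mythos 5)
Your proposal is correct and follows essentially the same route as the paper: take the divergence of \eqref{eq1.2}, use the contracted Bianchi identity \eqref{eq2.1} (equivalently the trace of \eqref{eq1.2}) and the Ricci identity so that the $\nabla_j(\divergence X)$ terms cancel, and then eliminate the residual $R_{ij}X^i$ by contracting \eqref{eq1.2} with $X$. All the coefficients you report, including $R_{jl}X^l=\lambda X_j-\tfrac14\nabla_j(|X|^2)-\tfrac12\nabla_XX_j+\tfrac1m X_j|X|^2$, match the paper's intermediate steps \eqref{eq2.10}--\eqref{eq2.11}.
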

\begin{proof}
Taking the divergence of \eqref{eq1.2}, we have
\begin{equation}
\label{eq2.8}
\nabla^i R_{ij} + \frac{1}{2} \Delta X_j + \frac{1}{2} \nabla_i\nabla_j X^i - \frac{1}{m} X_j \divergence X - \frac{1}{m}\nabla_X X_j = 0.
\end{equation}
We apply the contracted Bianchi identity and the Ricci identity to obtain
\begin{equation}
\label{eq2.9}
\frac{1}{2} \nabla_j R + \frac{1}{2} \Delta X_j  + \frac{1}{2} \nabla_j \left (\dv X \right ) + \frac{1}{2} R_{ij} X^i - \frac{1}{m} X_j \divergence X - \frac{1}{m} \nabla_X X_j = 0.
\end{equation}
This can be simplified using the trace of \eqref{eq1.2} and re-arranged to yield
\begin{equation}
\label{eq2.10}
 \frac{1}{2} \Delta X_j  + \frac{1}{2} R_{ij} X^i - \frac{1}{m} X_j \divergence X + \frac{1}{2m} \nabla_j \left (|X|^2 \right ) - \frac{1}{m} \nabla_X X_j = 0.
\end{equation}
Using \eqref{eq1.2} to replace $R_{ij}$, we get
\begin{equation}
\label{eq2.11}
\begin{split}
0=&\, \frac{1}{2} \Delta X_j  + \frac{\lambda}{2} X_j - \frac{1}{8}\nabla_j \left ( |X|^2\right ) - \frac{1}{4} \nabla_X X_j + \frac{1}{2m} X_j |X|^2  - \frac{1}{m} X_j \divergence X\\
&\, - \frac{1}{m} \nabla_X X_j + \frac{1}{2m}\nabla_j \left ( |X|^2\right ) .
\end{split}
\end{equation}
But this is \eqref{eq2.7}.
\end{proof}

An interesting corollary (though we will not use it in what follows) is the following.

\begin{corollary}\label{corollary2.2}
When $\divergence X=0$, $|X|^2=c^2=const$, and $\nabla_XX=0$, then $X$ obeys
\begin{equation}
\label{eq2.12}
\Delta X_i + \left ( \lambda +\frac{c^2}{m}\right ) X_i =0 .
\end{equation}
\end{corollary}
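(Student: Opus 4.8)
The plan is to obtain this as an immediate specialization of Lemma \ref{lemma2.1}, which already records the full divergence equation \eqref{eq2.7} satisfied by any solution of \eqref{eq1.2}. The strategy is simply to impose the three hypotheses $\divergence X=0$, $|X|^2=c^2=const$, and $\nabla_X X=0$ on that identity and observe that they annihilate exactly the terms obstructing the desired form.

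Concretely, I would start from
\begin{equation*}
0=\frac{1}{2}\Delta X_j + \frac{\lambda}{2} X_j - \left(\frac{1}{m}+\frac{1}{4}\right)\nabla_X X_j + \frac{1}{2}\left(\frac{1}{m}-\frac{1}{4}\right)\nabla_j\left(|X|^2\right) + \frac{1}{2m} X_j |X|^2 - \frac{1}{m} X_j \divergence X
\end{equation*}
and process the four non-Laplacian contributions in turn. The hypothesis $\nabla_X X=0$ eliminates the third term; since $|X|^2$ is constant, equation \eqref{eq2.2} gives $\nabla_j(|X|^2)=0$, which kills the fourth term; the assumption $\divergence X=0$ removes the final term; and in the penultimate term we substitute $|X|^2=c^2$ to turn it into $\tfrac{c^2}{2m}X_j$. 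What survives is $0=\tfrac12\Delta X_j + \tfrac{\lambda}{2}X_j + \tfrac{c^2}{2m}X_j$, and multiplying through by $2$ yields \eqref{eq2.12}.

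Since every step is a direct substitution into an already-established identity, there is essentially no analytic obstacle here; the only thing to watch is bookkeeping of the numerical coefficients, in particular verifying that the two separate $\nabla_j(|X|^2)$ contributions (the one displayed in \eqref{eq2.7} and any that might arise from re-expansion) are correctly accounted for before the constancy of $|X|^2$ is invoked. The interpretive point worth flagging is that \eqref{eq2.12} exhibits $X$ as an eigenfield of the (rough) Laplacian with eigenvalue $-(\lambda+c^2/m)$, which is precisely the structure that the main theorem will later exploit.
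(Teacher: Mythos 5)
Your proof is correct and is exactly the intended argument: the paper states Corollary \ref{corollary2.2} as an immediate specialization of Lemma \ref{lemma2.1} (it gives no separate proof), and substituting the three hypotheses into \eqref{eq2.7} and multiplying by $2$ yields \eqref{eq2.12} just as you describe. The coefficient bookkeeping checks out.
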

Of course, once we prove Theorem \ref{theorem1.1}, the assumptions $|X|^2=c^2=const$ and $\nabla_XX=0$ used above will follow from the assumption $\divergence X=0$.

We next obtain a differential equation governing $|X|^2$.

\begin{lemma}\label{lemma2.3}
Solutions of \eqref{eq1.2} satisfy
\begin{equation}
\label{eq2.13}
0= \Delta \left (|X|^2\right ) - \frac{1}{2} |S|^2 - \frac{1}{2} |A|^2 -  \nabla_X \left ( |X|^2 \right ) + 2 |X|^2 \left ( \lambda + \frac{1}{m} |X|^2 \right ) - \frac{4}{m} |X|^2 \divergence X .
\end{equation}
\end{lemma}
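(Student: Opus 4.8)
The plan is to combine the Bochner-type identity \eqref{eq2.3} with the expression for $\Delta X_j$ furnished by Lemma \ref{lemma2.1}. Since \eqref{eq2.3} reads $\Delta(|X|^2)=2|\nabla X|^2+2\langle X,\Delta X\rangle$, the first term on the right is handled immediately by the first identity in \eqref{eq2.6}, which gives $2|\nabla X|^2=\tfrac12(|S|^2+|A|^2)$. These two contributions already produce the $\Delta(|X|^2)$, $-\tfrac12|S|^2$, and $-\tfrac12|A|^2$ terms appearing in \eqref{eq2.13}, so the entire task reduces to computing the scalar $\langle X,\Delta X\rangle=X^j\Delta X_j$.

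For that I would first solve \eqref{eq2.7} for $\Delta X_j$, doubling the equation and isolating the Laplacian, so that $\Delta X_j=-\lambda X_j+(\tfrac2m+\tfrac12)\nabla_X X_j-(\tfrac1m-\tfrac14)\nabla_j(|X|^2)-\tfrac1m X_j|X|^2+\tfrac2m X_j\divergence X$, and then contract against $X^j$. The term $-\lambda X_j$ contracts to $-\lambda|X|^2$, while the two terms carrying an explicit factor $X_j$ contract to $-\tfrac1m(|X|^2)^2$ and $+\tfrac2m|X|^2\divergence X$. The two first-order derivative terms are treated using \eqref{eq2.4}, via $X^j\nabla_X X_j=\langle X,\nabla_X X\rangle=\tfrac12\nabla_X(|X|^2)$, together with $X^j\nabla_j(|X|^2)=\nabla_X(|X|^2)$.

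The single step requiring care is the bookkeeping of these derivative terms. The $\nabla_X X_j$ term contributes $(\tfrac2m+\tfrac12)\cdot\tfrac12\,\nabla_X(|X|^2)=(\tfrac1m+\tfrac14)\nabla_X(|X|^2)$, while the $\nabla_j(|X|^2)$ term contributes $-(\tfrac1m-\tfrac14)\nabla_X(|X|^2)$; these combine so that the $1/m$ pieces cancel and the net coefficient of $\nabla_X(|X|^2)$ is exactly $\tfrac12$. I therefore expect $\langle X,\Delta X\rangle=-\lambda|X|^2+\tfrac12\nabla_X(|X|^2)-\tfrac1m(|X|^2)^2+\tfrac2m|X|^2\divergence X$, whence $2\langle X,\Delta X\rangle=-2\lambda|X|^2+\nabla_X(|X|^2)-\tfrac2m(|X|^2)^2+\tfrac4m|X|^2\divergence X$. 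Adding the $\tfrac12(|S|^2+|A|^2)$ from the gradient term, transposing everything to one side, and regrouping $2\lambda|X|^2+\tfrac2m(|X|^2)^2=2|X|^2(\lambda+\tfrac1m|X|^2)$ then reproduces \eqref{eq2.13}. The only genuine obstacle is the sign and coefficient arithmetic; there is no analytic subtlety beyond the pointwise identities already established in subsection \ref{subsection2.1} and Lemma \ref{lemma2.1}.
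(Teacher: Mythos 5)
Your proposal is correct and follows essentially the same route as the paper: contract \eqref{eq2.7} with $X$, use \eqref{eq2.3}, \eqref{eq2.4}, and \eqref{eq2.6} to convert $\langle X,\Delta X\rangle$ and $|\nabla X|^2$, and collect terms. The coefficient bookkeeping you flag (the $1/m$ pieces of the two derivative terms cancelling to leave a net $\tfrac12\,\nabla_X(|X|^2)$, i.e.\ $-\tfrac14$ after moving to the paper's side) checks out and reproduces \eqref{eq2.13} exactly.
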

\begin{proof}
If we contract equation \eqref{eq2.7} against $X$, we obtain
\begin{equation}
\label{eq2.14}
\begin{split}
0=&\, \frac{1}{2} \left \langle X, \Delta X \right \rangle + \frac{\lambda}{2} |X|^2 - \frac12 \left( \frac{1}{m} + \frac{1}{4} \right) \nabla_X \left ( |X|^2\right ) \\
&\, + \frac{1}{2} \left( \frac{1}{m} - \frac{1}{4} \right)\nabla_X \left ( |X|^2\right ) + \frac{1}{2m} |X|^4  - \frac{1}{m} |X|^2 \divergence X,
\end{split}
\end{equation}
which can be rewritten as
\begin{equation}
\label{eq2.15}
0= \frac{1}{4} \Delta \left ( |X|^2\right ) - \frac{1}{2} |\nabla X|^2 + \frac{\lambda}{2} |X|^2 - \frac{1}{4}  \nabla_X \left ( |X|^2\right ) + \frac{1}{2m} |X|^4  - \frac{1}{m} |X|^2 \divergence X.
\end{equation}
Applying the formulas for $|\nabla X|$ in terms of $S$ and $A$, we obtain \eqref{eq2.13}.
\end{proof}

Before continuing with the main argument, we note the following fact, which shows that that quasi-Einstein metrics on closed manifolds with incompressible $X$ and $m\le 0$ exist only if $\lambda>0$.

\begin{proposition}\label{proposition2.4}
If $(M,g,X)$ obeys \eqref{eq1.2} on a closed manifold $(M,g)$ with $\divergence X = 0$, $\lambda\le 0$, and $m<0$, then $|X| \equiv 0$ and $(M,g)$ is Einstein.
\end{proposition}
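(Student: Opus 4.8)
The plan is to integrate the pointwise identity of Lemma \ref{lemma2.3} over the closed manifold $M$ and then exploit the sign hypotheses. First I would set $\divergence X = 0$ in \eqref{eq2.13}; this annihilates the final term and leaves
\[
0 = \Delta\left(|X|^2\right) - \tfrac12|S|^2 - \tfrac12|A|^2 - \nabla_X\left(|X|^2\right) + 2|X|^2\left(\lambda + \tfrac1m|X|^2\right).
\]
Integrating over $M$, the term $\int_M \Delta(|X|^2)\,dV$ vanishes because $M$ is closed. The drift term also vanishes after integrating by parts: $\int_M \nabla_X(|X|^2)\,dV = \int_M X^i\nabla_i(|X|^2)\,dV = -\int_M |X|^2\,\divergence X\,dV = 0$, again using incompressibility.

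This reduces the identity to
\[
\tfrac12\int_M\left(|S|^2 + |A|^2\right)dV = 2\int_M |X|^2\left(\lambda + \tfrac1m|X|^2\right)dV.
\]
The crux is the clash of signs between the two sides. The left-hand side is manifestly nonnegative. On the right, the hypotheses $\lambda \le 0$ and $m < 0$ force $\lambda + \tfrac1m|X|^2 \le 0$ at every point, so the integrand there is pointwise nonpositive and the right-hand side is $\le 0$. Consequently both sides must equal zero, and in particular the nonpositive integrand on the right must vanish identically, i.e. $|X|^2\bigl(\lambda + \tfrac1m|X|^2\bigr) = 0$ at each point of $M$.

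It then remains to deduce $|X| \equiv 0$ from this pointwise equation, which I would do by splitting on the sign of $\lambda$. If $\lambda < 0$, then $-m\lambda < 0$ since $m < 0$, so the equality $|X|^2 = -m\lambda$ is impossible for the nonnegative function $|X|^2$; hence $|X|^2 = 0$. If $\lambda = 0$, the equation reads $\tfrac1m|X|^4 = 0$, which again gives $|X| = 0$ because $m \neq 0$. Either way $X$ vanishes identically, and \eqref{eq1.2} collapses to $\ric = \lambda g$, so $(M,g)$ is Einstein.

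I do not expect a genuine obstacle here: once Lemma \ref{lemma2.3} is in hand, the argument is essentially sign bookkeeping, the one delicate point being the boundary case $\lambda = 0$. There the linear term $-m\lambda$ degenerates, so one cannot conclude from $|X|^2 \neq -m\lambda$ and must instead read the conclusion off the quartic term $\tfrac1m|X|^4$; handling this case explicitly is the only step requiring care.
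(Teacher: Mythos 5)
Your proposal is correct and follows essentially the same route as the paper: the paper integrates \eqref{eq2.15} (of which \eqref{eq2.13} is just a rescaled rewriting via $|\nabla X|^2=\tfrac14(|S|^2+|A|^2)$) over $M$, sets $\divergence X=0$, and observes that for $\lambda\le 0$ and $m<0$ the resulting integrand is nonpositive and vanishes only where $X$ does. Your explicit case split on $\lambda<0$ versus $\lambda=0$ is a slightly more careful rendering of the paper's one-line sign argument, but it is the same proof.
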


\begin{proof}
If we integrate \eqref{eq2.15} over $M$ , we get
\begin{equation}
\label{eq2.16}
%\begin{split}
0= \int_M \left [ - \frac{1}{2}|\nabla X|^2 + \frac{\lambda}{2} |X|^2 + \frac{1}{2m} |X|^4  +\left ( \frac14 - \frac{1}{m}\right ) |X|^2 \divergence X\right ] dV .\\
%=&\, \int_M \left [ - \frac{1}{2}|\nabla X|^2 + \frac{\lambda}{2} |X|^2 + \frac{1}{2m} |X|^4  \right ] dV,
%\end{split}
\end{equation}
Setting $\divergence X=0$ in \eqref{eq2.16}, we see that if $\lambda\le 0$ and $m<0$ then the right-hand side is negative-definite unless $X$ vanishes identically.
\begin{comment}
The maximum principle applied to \eqref{eq2.15} shows that if $X$ is not identically zero then at the maximum of $|X|^2$ we must have
\begin{equation}
\label{eq2.16}
\frac{1}{m}\left ( |X|^2 -2\divergence X \right ) \ge -\lambda.
\end{equation}
This contradicts the assumptions on $\lambda$, $m$, and $\divergence X$.
\end{comment}
\end{proof}

Returning to the main thrust, the divergence of equation \eqref{eq2.7} yields a differential equation governing $\divergence X$, which we will later use in conjunction with \eqref{eq2.13} governing $\divergence X$ to produce the main result.

\begin{lemma}\label{lemma2.5}
Solutions of \eqref{eq1.2} satisfy
\begin{equation}
\label{eq2.17}
\begin{split}
&\, -\Delta \dv X  + \left(\frac{4}{m} + 1\right) \nabla_X\left ( \divergence X \right ) - 2 \lambda \dv X - \frac{1}{m} |X|^2 \divergence X + \frac{2}{m} (\dv X)^2\\
=&\, \left( \frac{1}{m} - \frac{1}{4} \right) \Delta \left ( |X|^2\right )  - \left(\frac{3}{8} + \frac{1}{2m} \right) |S|^2 + \left(\frac{1}{2m} +  \frac{1}{8}\right) |A|^2 \\
&\, + \left(\frac{3}{m} + \frac{1}{4} \right) \nabla_X \left ( |X|^2\right )- 2\left( \frac{1}{m} + \frac{1}{4} \right) |X|^2 \left[  \lambda + \frac{1}{m} |X|^2 \right] .\\
\end{split}
\end{equation}
\end{lemma}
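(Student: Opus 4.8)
The plan is to compute the divergence $\nabla^j$ of equation \eqref{eq2.7} term by term and then eliminate all curvature using the quasi-Einstein equation. Several terms are immediate: $\nabla^j(\frac{\lambda}{2}X_j)=\frac{\lambda}{2}\dv X$; the term $\frac12(\frac1m-\frac14)\nabla_j(|X|^2)$ contributes $\frac12(\frac1m-\frac14)\Delta(|X|^2)$; and the product rule applied to the last two terms of \eqref{eq2.7}, together with \eqref{eq2.2}, produces $\frac1{2m}[(\dv X)|X|^2+\nabla_X(|X|^2)]$ and $-\frac1m[(\dv X)^2+\nabla_X(\dv X)]$. The two genuinely nontrivial terms are $\frac12\nabla^j\Delta X_j$ and $\nabla^j\nabla_X X_j$, both of which require commuting covariant derivatives and hence introduce curvature.

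For the Laplacian term I would invoke the commutation identity for a $1$-form,
\[
\nabla^j\Delta X_j=\Delta(\dv X)+\nabla^i(R_{ij}X^j),
\]
which follows from the Ricci identity (equivalently, from the Weitzenb\"ock formula, so that only the Ricci tensor, and not the full Riemann tensor, survives). This is the crucial structural point: were the full Riemann tensor to appear, it could not be removed by \eqref{eq1.2}. For the quadratic term, writing $\nabla^j\nabla_X X_j=\nabla^jX^i\nabla_iX_j+X^i\nabla^j\nabla_iX_j$ and commuting in the second factor yields $\frac14(|S|^2-|A|^2)$ from the middle identity in \eqref{eq2.6}, together with $\nabla_X(\dv X)$ and a Ricci contraction $\ric(X,X)$.

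All curvature is then eliminated using \eqref{eq1.2}, which gives $R_{ij}=\lambda g_{ij}-\frac12 S_{ij}+\frac1m X_iX_j$, together with the contracted Bianchi identity \eqref{eq2.1}. In $\nabla^i(R_{ij}X^j)=\frac12(\nabla_jR)X^j+R_{ij}\nabla^iX^j$ one applies \eqref{eq2.1} to the first piece; in the second piece the antisymmetric part $A$ drops out against the symmetric Ricci tensor, the trace of $S$ contributes $2\dv X$, and the contraction $X_iX_jS^{ij}$ equals $\nabla_X(|X|^2)$ by \eqref{eq2.2}, so that $R_{ij}\nabla^iX^j=\lambda\dv X-\frac14|S|^2+\frac1{2m}\nabla_X(|X|^2)$. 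The Ricci contraction from the quadratic term reduces similarly to $\ric(X,X)=\lambda|X|^2-\frac12\nabla_X(|X|^2)+\frac1m|X|^4$, which is precisely the source of the $|X|^2[\lambda+\frac1m|X|^2]$ term. Substituting these, collecting the coefficients of $\Delta(|X|^2)$, $|S|^2$, $|A|^2$, $\nabla_X(|X|^2)$, $\nabla_X(\dv X)$, $(\dv X)^2$, and $|X|^2\dv X$, and then multiplying by $2$ and rearranging so that the $\dv X$-Laplacian terms sit on the left, produces \eqref{eq2.17}.

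The main obstacle is the bookkeeping in the commutator step: one must verify that the only curvature produced by $\nabla^j\Delta X_j$ and by $\nabla^j\nabla_X X_j$ is of Ricci type, so that \eqref{eq1.2} suffices to remove it, and then track the numerous rational coefficients in $m$ without error as the various $\nabla_X(|X|^2)$, $|S|^2$, and $\dv X$ contributions coming from the curvature combine with those arising directly from the product rule. No idea beyond the Ricci identity and equations \eqref{eq1.2} and \eqref{eq2.1} is required; the difficulty is entirely computational.
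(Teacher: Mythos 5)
Your proposal is correct and follows essentially the same route as the paper: take the divergence of \eqref{eq2.7}, use the Ricci identity to reduce $\nabla^j\Delta X_j$ to $\Delta(\dv X)+\nabla^i(R_{ij}X^j)$ and $X^i\nabla_j\nabla_iX^j$ to $\nabla_X(\dv X)+\ric(X,X)$, then eliminate all curvature via \eqref{eq1.2} and the contracted Bianchi identity \eqref{eq2.1}. Your intermediate reductions (e.g.\ $R_{ij}\nabla^iX^j=\lambda\dv X-\tfrac14|S|^2+\tfrac1{2m}\nabla_X(|X|^2)$ and $\ric(X,X)=\lambda|X|^2-\tfrac12\nabla_X(|X|^2)+\tfrac1m|X|^4$) match the paper's equations \eqref{eq2.19}--\eqref{eq2.20} exactly.
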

\begin{proof}

Taking the divergence of equation \eqref{eq2.7} and multiplying by $2$, we obtain
\begin{equation}
\label{eq2.18}
\begin{split}
0=&\, \nabla^j \left ( \Delta X_j\right )  + \lambda \dv X - 2\left( \frac{1}{m} + \frac{1}{4} \right) \nabla^jX^i \nabla_i X_j - 2\left ( \frac{1}{m} + \frac{1}{4} \right ) X^i \nabla_j \nabla_i X^j \\
&\, +  \left( \frac{1}{m} - \frac{1}{4} \right) \left ( \Delta |X|^2\right ) + \frac{1}{m} |X|^2 \divergence X + \frac{1}{m} \nabla_X \left ( |X|^2 \right )- \frac{2}{m} \nabla_X (\dv X)\\
&\, - \frac{2}{m} (\dv X)^2.
\end{split}
\end{equation}
The Ricci identity and equation \eqref{eq1.2} give
\begin{equation}
\label{eq2.19}
X^i \nabla_j \nabla_i X^j = \nabla_X\left ( \divergence X \right ) + \lambda |X|^2 + \frac{1}{m} |X|^4 - \frac{1}{2} \nabla_X \left ( |X|^2 \right ) .
\end{equation}
Next, we simplify $\nabla^j \left ( \Delta X_j\right )$ as follows.
\begin{equation}
\label{eq2.20}
\begin{split}
\nabla^j \left ( \Delta X_j\right ) =&\, \nabla^i\nabla_j\nabla_i X^j + \riemuudu{i}{j}{j}{k} \nabla_i X_k + \riemuudu{i}{j}{i}{k} \nabla_k X_j \\
=&\, \nabla^i\nabla_j\nabla_i X^j + R^{ki}\nabla_i X_k - R_{jk} \nabla^k X^j \\
=&\, \nabla^i\nabla_j\nabla_i X^j \\
=&\, \Delta \left ( \dv X \right ) + \nabla^i \left (R_{ij} X^j \right ) \\
=&\, \Delta \left ( \dv X \right ) + X^j\nabla^i R_{ij} + R_{ij} \nabla^i X^j \\
=&\, \Delta \left ( \dv X \right ) + \frac{1}{m} \nabla_X \left ( |X|^2\right ) - \frac{1}{2} |\nabla X|^2 - \frac{1}{2} \nabla^i X^j \nabla_j X_i + \lambda \dv X\\
&\, - \frac{1}{2} \nabla_X\left ( \divergence X \right ) .
\end{split}
\end{equation}
Now we rewrite equation \eqref{eq2.18} using \eqref{eq2.19} and \eqref{eq2.20}. We obtain
\begin{equation}
\label{eq2.21}
\begin{split}
0=&\, \Delta \left ( \dv X\right ) + \frac{1}{m} \nabla_X \left (|X|^2\right ) - \frac{1}{2} |\nabla X|^2 - \frac{1}{2} \nabla^i X^j\nabla_j X_i + \lambda \dv X\\
&\, - \frac{1}{2} \nabla_X\left ( \divergence X \right ) + \lambda \dv X - 2\left( \frac{1}{m} + \frac{1}{4} \right) \nabla^i X^j\nabla_j X_i\\
&\, - 2\left ( \frac{1}{m} + \frac{1}{4} \right) \left [ \nabla_X\left ( \divergence X \right )  + \lambda |X|^2 + \frac{1}{m} |X|^4- \frac{1}{2} \nabla_X \left ( |X|^2\right ) \right ]\\
&\, + \left ( \frac{1}{m} - \frac{1}{4} \right ) \Delta \left ( |X|^2\right ) + \frac{1}{m} |X|^2 \divergence X + \frac{1}{m} \nabla_X\left (|X|^2\right ) \\
&\, - \frac{2}{m} \nabla_X (\dv X) - \frac{2}{m} (\dv X)^2 .
\end{split}
\end{equation}
Re-organizing terms, this reads
\begin{equation}
\label{eq2.22}
\begin{split}
&\, \left ( \frac{1}{m} - \frac{1}{4} \right ) \Delta \left ( |X|^2\right ) - \frac{1}{2} |\nabla X|^2    - 2\left ( \frac{1}{m} + \frac{1}{2} \right ) \nabla^i X^j \nabla_j X_i \\
&\, + \left ( \frac{3}{m} + \frac{1}{4} \right ) \nabla_X \left ( |X|^2\right ) - 2\left ( \frac{1}{m} + \frac{1}{4} \right ) |X|^2 \left ( \lambda + \frac{1}{m} |X|^2 \right ) \\
=&\, -\Delta \dv X  + \left ( \frac{4}{m} + 1\right ) \nabla_X\left ( \divergence X \right ) - 2 \lambda \dv X - \frac{1}{m} |X|^2 \divergence X\\
&\, + \frac{2}{m} (\dv X)^2.
\end{split}
\end{equation}
It remains to replace $|\nabla X|^2$ and $\nabla^i X^j \nabla_j X_i$ with their expressions in terms of $S$ and $A$. Doing so, we obtain
\begin{equation}
\label{eq2.23}
\begin{split}
&\, \left( \frac{1}{m} - \frac{1}{4} \right) \Delta \left ( |X|^2\right ) - \left(\frac{3}{8} + \frac{1}{2m} \right) |S|^2 + \left(\frac{1}{2m} +  \frac{1}{8}\right) |A|^2 \\
&\, + \left(\frac{3}{m} + \frac{1}{4} \right) \nabla_X \left ( |X|^2\right ) - 2\left( \frac{1}{m} + \frac{1}{4} \right) |X|^2 \left ( \lambda + \frac{1}{m} |X|^2 \right ) \\
=&\, -\Delta \left ( \dv X \right ) + \left(\frac{4}{m} + 1\right) \nabla_X\left ( \divergence X \right ) - 2 \lambda \dv X - \frac{1}{m} |X|^2 \divergence X\\
&\, + \frac{2}{m} (\dv X)^2.
\end{split}
\end{equation}
But this is \eqref{eq2.17}. \end{proof}

\subsection{The proofs} \label{subsection2.3}

We begin with the proof of Proposition \ref{proposition1.6}, which follows straightforwardly from the results of the previous subsection. In turn, the proof of the main theorem follows directly from Proposition \ref{proposition1.6}.

\begin{proof}[Proof of Proposition \ref{proposition1.6}]
Multiply equation \eqref{eq2.13} by $(1/m + 1/4)$ and add the result to equation \eqref{eq2.17} to get
\begin{equation}
\label{eq2.24}
\begin{split}
&\, \frac{2}{m} \Delta \left ( |X|^2\right ) + \frac{2}{m} \nabla_X \left ( |X|^2\right ) - \left ( \frac{1}{m} + \frac{1}{2} \right ) \left \vert \pounds_X g\right \vert^2 \\
=&\, -\Delta \left ( \dv X \right )  + \left ( \frac{4}{m} + 1\right ) \nabla_X\left ( \divergence X \right ) - 2 \lambda \dv X  + \frac{4}{m^2} |X|^2 \divergence X\\
&\, + \frac{2}{m} (\dv X)^2.
\end{split}
\end{equation}
Integrating over the closed manifold $M$, we obtain
\begin{equation}
\label{eq2.25}
\begin{split}
&\, \int \left [ \frac{2}{m}|X|^2\divergence X +\left( \frac{1}{m} + \frac{1}{2} \right ) \left \vert \pounds_X g\right \vert^2 \right ] dV\\
=&\, \int_M \left [ \left ( \frac{2}{m} + 1\right ) (\divergence X)^2 -\frac{4}{m^2} |X|^2 \divergence X\right ] dV .
\end{split}
\end{equation}
This can be written as
\begin{equation}
\label{eq2.26}
0=\left ( 1+\frac{2}{m}\right ) \int_M \left [ \left ( \divergence X -\frac{2}{m} |X|^2 \right ) \divergence X -\frac12 \left \vert \pounds_X g\right \vert^2 \right ] dV.
\end{equation}
Since by assumption $m\neq-2$, we can drop the prefactor of $1+\frac{2}{m}$. We can also complete the square in the first term. Then
\begin{equation}
\label{eq2.27}
0=\int_M \left [ \left ( \divergence X -\frac{1}{m} |X|^2 \right )^2-\frac{1}{m^2} \left ( |X|^2 \right )^2
-\frac12 \left \vert \pounds_X g\right \vert^2 \right ] dV,
\end{equation}
which completes the proof.
\end{proof}

\begin{proof}[Proof of Theorem \ref{theorem1.1}]
First, it is immediately clear that if $\divergence X -\frac{1}{m} |X|^2=0$ pointwise, then $X$ must vanish pointwise [proof: integrate $\divergence X -\frac{1}{m} |X|^2=0$ over $M$] so $(M,g)$ is Einstein.\footnote
{The reader may notice that, strictly speaking, this contradicts the assumption that $m\neq 0$.}
Otherwise, setting $\divergence X=0$ in \eqref{eq1.5}, we obtain $\int_M \left \vert \pounds_X g\right \vert^2 dV=0$ and hence $\pounds_X g=0$ pointwise. Then \eqref{eq2.24} reduces to
\begin{equation}
\label{eq2.28}
0=\frac{2}{m} \Delta \left ( |X|^2\right ) + \frac{2}{m} \nabla_X \left ( |X|^2\right ).
\end{equation}
Multiplying by $-m|X|^2$ and integrating over $M$, we get that
\begin{equation}
\label{eq2.29}
2\int_M \left \vert \nabla \left ( |X|^2\right )\right \vert^2 dV =-\int_M \left ( |X|^2\right )^2 \divergence X dV =0.
\end{equation}
where we use that $\divergence X=0$. Hence $|X|=c=const$, and $c=0$ only if $X\equiv 0$ whence $(M,g)$ is Einstein. Then
%so $|X|=c=const$ by the strong maximum principle, and $c=0$ only if $X\equiv 0$ whence $(M,g)$ is Einstein. Then
\begin{equation}
\label{eq2.30}
0=\left \langle X,\pounds_X g\right \rangle =\nabla_X X +\frac12 \nabla \left ( |X|^2\right )= \nabla_X X ,
\end{equation}
so the integral curves of $X$ are (since $c\neq 0$) constant speed geodesics. Moreover, \eqref{eq2.13} reduces to
\begin{equation}
\label{eq2.31}
0=-|A|^2 + 4c^2\left ( \lambda +\frac{c^2}{m}\right ) ,
\end{equation}
and since $A=dX$ we recover equation \eqref{eq1.3}; in particular, $|dX|$ is constant.
\end{proof}

\begin{proof}[Proof of Corollary \ref{corollary1.3}]
By assumption the norm of $X$ is somewhere nonzero. Since the theorem asserts that the norm is constant, it is nowhere zero, so by the Poincar\'e-Hopf theorem the Euler characteristic must vanish. But even-dimensional spheres have Euler characteristic $2$.
\end{proof}

\begin{proof}[Proof of Corollary \ref{corollary1.4}]
Theorem \ref{theorem1.1} implies that $\pounds_X g=0$, so \eqref{eq1.2} reduces to
\begin{equation}
\label{eq2.32}
\ric=\lambda g +\frac{1}{m}X\otimes X.
\end{equation}

To prove (i), since $|X|^2=c^2$ where $c$ is a positive constant, the eigenvalues are $\mu_{n-1}:=\lambda$ (with multiplicity $n-1$) and $\mu_1:=\lambda+\frac{c^2}{m}$ (with multiplicity $1$). If $n=2$, the Ricci tensor obeys $\ric=\frac12 Rg$ and so cannot have two distinct eigenvalues at a point, so $n\ge 3$.

To prove (ii), observe that the eigenvalue $\mu_1$ has eigenvector $X$ which as we have noted is also a Killing vector. Then a simple application of a Bochner formula (see, e.g., \cite[Theorem 37, p 192]{Petersen}) yields
\begin{equation}
\label{eq2.33}
\begin{split}
0=&\, \frac12 \Delta \left ( |X|^2\right ) = |\nabla X|^2-\ric(X,X)= |\nabla X|^2-\lambda |X|^2-\frac{1}{m}\left ( |X|^2\right )^2\\
=&\,|\nabla X|^2-\lambda c^2 -\frac{c^4}{m}=|\nabla X|^2-c^2\mu_1.
\end{split}
\end{equation}
Then $\mu_1\ge 0$, and is zero if and only if $X$ is parallel, whence obviously $dX$ vanishes. If $\mu_1=0$ and  since by assumption $c^2=|X|^2\neq 0$ and $m>0$, then $\lambda<0$ and by the theorem of Wylie \cite{Wylie}, then $M$ splits as claimed.

To prove (iii), if $\mu_1>0$, then $X$ is not parallel; i.e., $\nabla X$ is nonzero. But $\pounds_X g=0$, so $dX$ is nonvanishing and $(M,g,X)$ cannot be static.

To prove (iv), we use \eqref{eq2.32} and the definition of the Cotton tensor to write
\begin{equation}
\label{eq2.34}
\begin{split}
C_{kij}:=& \frac{1}{(n-2)} \left [ \nabla_j \left ( R_{ik}-\frac{1}{2(n-1)}Rg_{ik}\right ) - \nabla_i \left ( R_{jk}-\frac{1}{2(n-1)}Rg_{jk}\right )\right ]\\
=&\, \frac{1}{m(n-2)}\left [ X_i\nabla_j X_k +X_k\nabla_jX_i-X_j\nabla_i X_k -X_k\nabla_iX_j\right ]\\
=&\, \frac{1}{2m(n-2)}\left [X_i \left ( \nabla_j X_k -\nabla_k X_j \right ) + X_j \left ( \nabla_k X_i -\nabla_i X_k \right ) \right . \\
&\, \left . -2X_k \left ( \nabla_i X_j -\nabla_j X_i \right )\right ]\\
=&\, \frac{1}{2m(n-2)}\left [X_i \left ( dX \right )_{jk} + X_j \left ( dX \right )_{ki}-2X_k \left ( dX \right )_{ij}\right ] ,
\end{split}
\end{equation}
where we have used from Theorem \ref{theorem1.1} that $|X|^2$ is constant (so the scalar curvature $R$ is constant). Thus $C=0$ whenever $dX=0$, which by \eqref{eq1.3} is equivalent to $|X|^2=-m\lambda$ (and occurs only when $m\lambda<0$; recall we exclude $X\equiv 0$ by assumption). To see the converse, we contract \eqref{eq2.34} against $X^k$ and use that $X^k\nabla_i X_k=\frac12 \nabla_i \left ( |X|^2\right )=0$ and $X^k \nabla_k X_i=0$. We get
\begin{equation}
\label{eq2.35}
X^k C_{kij} = -\frac{1}{m(n-2)}|X|^2 \left ( \nabla_i X_j -\nabla_j X_i \right ) = -\frac{1}{m(n-2)}|X|^2(dX)_{ij}.
\end{equation}
If $dX$ does not vanish then neither does the right-hand side, and so nor does $C$.

If $n=3$ and $(M,g)$ is locally conformally flat, then $C=0$. If $n\ge 4$ and $(M,g)$ has harmonic Weyl curvature, this means that the divergence of the Weyl tensor is zero, and a Bianchi identity then implies that $C=0$. Therefore, in either case we see from \eqref{eq2.35} that $dX=0$, so the manifold is \emph{static} in the terminology of \cite{BGKW}, and since we also have that $\pounds_X g=0$ then $\nabla X=0$. For $\lambda<0$ the splitting follows from the theorem of \cite{Wylie}.
\end{proof}

\begin{proof}[Proof of Corollary \ref{corollary1.5}]
The Euler characteristic $\chi(M)$ of a closed manifold $M$ is expressible as the integral over the manifold of the Pfaffian of the curvature 2-form. Writing the Pfaffian in terms of the Weyl, Ricci, and scalar curvature, the formula for a 4-manifold is
\begin{equation}
\label{eq2.36}
\frac{4\pi^2}{3}\chi(M)=\frac{1}{24} \int_M \left ( |\weyl|^2-2|\ric|^2+\frac23 R^2\right ) dV,
\end{equation}
where $|\weyl|^2 = W_{ijkl} W^{ijkl}$ is the norm of the Weyl tensor regarded as a $(0,4)$-tensor. A short computation using equation \eqref{eq2.32} yields
\begin{equation}
\label{eq2.37}
\begin{split}
-2|\ric|^2+\frac23 R^2 =&\, -2 \left \vert \lambda g +\frac1m X\otimes X \right \vert^2 +\frac23 \left ( 4\lambda +\frac1m |X|^2\right )^2\\
=&\, \frac83 \lambda^2 +\frac{4\lambda}{3m}|X|^2 -\frac{4}{3m^2} \left ( |X|^2\right )^2.
\end{split}
\end{equation}
Hence, using from Theorem \ref{theorem1.1} that $|X|^2=c^2=const$, we have
\begin{equation}
\label{eq2.38}
\frac{4\pi^2}{3}\chi(M)=\frac{1}{24} \int_M |\weyl|^2dV +\frac{1}{24}\left [\frac83 \lambda^2 +\frac{4\lambda}{3m}c^2 -\frac{4c^4}{3m^2} \right ] \vol(M) .
\end{equation}
But by Corollary \ref{corollary1.3} we have $\chi(M)=0$, so we recover \eqref{eq1.4}.

Setting $\weyl=0$ and $\lambda=0$ in \eqref{eq1.4} yields $c=0$. For $\lambda\neq 0$ but $\weyl=0$, we find a positive root given by
\begin{equation}
\label{eq2.39}
|X|^2\equiv c^2 = \frac{m\lambda}{2}+\frac{3|m\lambda|}{2}.
\end{equation}
Conversely, when $|X|^2$ takes the value given by \eqref{eq2.39}, the right-hand side of \eqref{eq1.4} vanishes and therefore $\weyl=0$ pointwise, so $(M,g)$ is locally conformally flat. The conditions in (i) that $|X|^2\ge 2m\lambda$ and in (iii) that $|X|^2\ge -m\lambda$ arise since the right-hand side of \eqref{eq1.4} cannot be negative. The splitting in (iii) arises because $|X|^2= -m\lambda$ implies that $dX=0$, and then the theorem of \cite{Wylie} gives the isometric splitting $M=N\times {\mathbb S}^1$ where $N$ is closed, negative Einstein, and 3-dimensional, hence compact hyperbolic.

Since $M$ is assumed compact, connected, and orientable, and since $M$ admits a nonvanishing vector field $X$, we have $0=\chi(M)=\sum_{k=0}^4 (-1)^kb_k(M)\ge 2(1-b_1(M))$ so $b_1(M)\ge 1$ and therefore $M$ cannot be simply connected (for any $\lambda$ and any value of $c>0$).

\begin{comment}
Inserting $-\frac1m|X|^2=\lambda$ into the right-hand side of \eqref{eq1.4} and simplifying, we obtain $\int_M |\weyl|^2 dV=0$, so the Weyl tensor vanishes pointwise. Then the Cotton tensor must vanish pointwise. But by Corollory \ref{corollary1.4}.(iv), this can only happen if $dX=0$ pointwise, and then $(M,g,X)$ is \emph{static} \cite{BGKW}. Since $\lambda<0$, the theorem of \cite{Wylie} applies, and yields an isometric splitting $\Sigma\times {\mathbb S}^1$ where $\Sigma$ is a negative Einstein 3-manifold and hence compact hyperbolic. This proves statement (i).

If we instead set $\lambda=0$ in \eqref{eq1.4} we obtain
\begin{equation}
\label{eq2.39}
\int_M |\weyl|^2 dV = \frac{4}{3m^2} \int_M \left ( |X|^2\right )^2 dV,
\end{equation}
so the Weyl tensor vanishes everywhere if and only if $X$ does. This proves statement (ii).

Obviously $M$ is not simply connected in case (i) since the factor $\Sigma$ is a compact hyperbolic 3-manifold. In case (ii), when $X$ is not identically zero it has constant nonzero norm (by Theorem \ref{theorem1.1}). Then $\chi(M)=0$ by Poincar\'e-Hopf. Recalling that we take $M$ to be orientable (to invoke Poincar\'e duality) and connected, then $b_0(M)=b_4(M)=1$ so $0=\chi(M)=2+b_2(M)-2b_1(M)$ and thus $b_1(M)\neq 0$. But if $M$ were simply connected, $b_1(M)$ would have to vanish.
\end{comment}
\end{proof}

\begin{proof}[Proof of Proposition \ref{proposition1.7}]
If we multiply \eqref{eq2.23} by $-\frac{2}{m}$ and integrate over a closed manifold $M$, we obtain
\begin{equation}
\label{eq2.40}
\begin{split}
0=&\, \int_M \left [ \frac{1}{m}\left ( \frac34 +\frac{1}{m} \right ) |S|^2 -\frac{1}{m}\left ( \frac14 + \frac{1}{m}\right ) |A|^2 +\frac{2}{m} \left ( \frac{2}{m}+\frac14 \right ) |X|^2\divergence X\right . \\
&\, \left . +\frac4m \left ( \frac14 +\frac1m \right )|X|^2 \left ( \lambda +\frac1m |X|^2\right ) -\frac2m \left ( \frac2m +1\right ) (\divergence X)^2\right ] dV.
\end{split}
\end{equation}
On the other hand, if we multiply \eqref{eq2.26} by $\left ( \frac14+\frac2m \right )$ and, assuming that $m\neq -2$, divide out the prefactor $\left ( 1+\frac{2}{m}\right )$, we obtain
\begin{equation}
\label{eq2.41}
\begin{split}
0=&\, \int_M\left [ -\frac2m \left ( \frac14 + \frac2m \right ) |X|^2 \divergence X +\left ( \frac14 +\frac2m \right )(\divergence X)^2\right . \\
&\, \left . -\frac12 \left ( \frac14 +\frac2m\right )|S|^2 \right ] dV
\end{split}
\end{equation}
whenever $m\neq -2$. Add these, and assume that $m\neq -4$ in order to remove a common factor of $\left ( \frac14 +\frac1m\right )$ from the result. We get
\begin{equation}
\label{eq2.42}
\begin{split}
0=&\, \int_M \left [ \left ( \frac1m -\frac12 \right )|S|^2-\frac1m |A|^2 +\left ( 1-\frac4m\right ) (\divergence X)^2 \right . \\
&\, \left . +\frac4m |X|^2 \left ( \lambda +\frac1m |X|^2\right ) \right ] dV\\
=&\, \int_M \left [ \left ( \frac1m -\frac12 \right )|S|^2-\frac1m |A|^2 +\left ( 1-\frac4m\right ) (\divergence X)^2 \right . \\
&\, \left . +\left ( \lambda +\frac2m |X|^2\right )^2 -\lambda^2\right ] dV.
\end{split}
\end{equation}
A simple integration of the last term yields equation \eqref{eq1.7} and proves the proposition.
\end{proof}

\begin{proof}[Proof of Corollary \ref{corollary1.8}]
When $|X|^2=-m\lambda$, equation \eqref{eq1.7} reduces to
\begin{equation}
\label{eq2.43}
0  =\int_M \left [ \left ( \frac12 -\frac1m \right )\left \vert \pounds_X g\right \vert^2 +\frac1m \left \vert dX\right \vert^2 +\left ( \frac4m -1\right ) (\divergence X)^2 \right ] dV .
\end{equation}
For $2<m\le 4$ clearly $\pounds_X g = 0$ and $dX=0$, so $\nabla X=0$. Then $(M,g)$ splits isometrically as claimed \cite{Wylie}. If $m=2$, then we have only that $dX=0$ and $\divergence X=0$, but then we can invoke Corollary \ref{corollary1.5}.(iii) to deduce local conformal flatness and the stated isometric splitting when $n=4$.
\end{proof}

\appendix
\section{Nontrivial examples of quasi-Einstein manifolds with incompressible $X$}\label{appendix}
\setcounter{equation}{0}

\noindent We now exhibit a simple, known class of closed quasi-Einstein manifolds $(M,g,X)$ for any $m>0$ that have incompressible vector field $X$ but are not left-invariant metrics on a homogeneous manifold. Consider a metric defined on a $(2n+1)$-dimensional manifold $M$ of the form~\cite{Kunduri:2012uq}
\begin{equation}
\label{eqA.1}
g = (d\psi + \hat\sigma)^2 + \hat{g}
\end{equation}
where $\hat \sigma$ is a one-form and $\hat{g}$ is a metric defined in a $2n$-dimensional manifold. Let $X = \alpha \partial_\psi$ where $\alpha \geq 0$ is a constant. Obviously, $|X|^2 = \alpha^2$ is then constant. Define $\hat\omega = d \hat \sigma/2$. The quasi-Einstein equation \eqref{eq1.2} reduces to
\begin{equation}
\label{qEexample}
%\label{eqA.2}
\begin{split}
\hat{R}_{ij} =&\, 2 \hat \omega_{ik}\hat\omega_{j}^{\;k} + \lambda \hat{g}_{ij},\\
\hat {\nabla}^i \hat \omega_{ij} =&\, 0, \\
\hat\omega_{ij}\hat\omega^{ij} =&\, \frac{\alpha^2}{m} + \lambda .
\end{split}
\end{equation}
where $\hat{R}_{ij}$ is the Ricci curvature of the Levi-Civita connection $\hat\nabla$ of $\hat{g}$. Assume that $\hat\omega \neq 0$ and $\alpha^2/m + \lambda >0$ (note that $\lambda <0$ is still permitted). We may choose the transverse space to be a K\"ahler manifold $(B,\hat{g}, \hat{J})$ with complex structure $\hat{J}$. Fixing $\hat\omega$ to be proportional to $\hat{J}$, the second equation of \eqref{qEexample} is automatically satisfied, and the remaining equations are satisfied provided
\begin{equation}
\label{eqA.3}
\begin{split}
\hat \omega =&\, \sqrt{\frac{\alpha^2}{2mn} + \frac{\lambda}{2n}} \hat{J},\\
\hat{R}_{ij} =&\, \left( \frac{\alpha^2}{mn} + \left( 1 + \frac{1}{n}\right)\lambda \right) \hat{g}_{ij}
\end{split}
\end{equation}
This shows that $(B,\hat{g},\hat{J})$ is actually K\"ahler-Einstein.

For $n=1$ this is in fact the only solution to \eqref{qEexample}, namely $(B, \hat{g})$ is ${\mathbb S}^2 \cong \mathbb{CP}^1$ with its suitably normalized round metric. In this case the full quasi-Einstein metric, upon choosing $\psi$ to be periodically identified, will be ${\mathbb S}^3$ with a homogeneously squashed (Berger) metric.

To give examples that are not left-invariant metrics on homogeneous spaces, we now take $n>1$. We may choose $(B,\hat{g}, \hat{J})$ to be any closed K\"ahler-Einstein manifold. For concreteness, consider the following family of K\"ahler metrics, used in the construction of Sasaki-Einstein manifolds \cite{Gauntlett:2004yd, Gauntlett:2004hh}:
\begin{equation}
\label{eqA.4}
\begin{split}
\hat{g} =&\, \frac{x^{n-1} dx^2}{2P(x)} + \frac{2P(x)}{x^{n-1}} (d\phi + \bar{\sigma})^2 + 2 x \bar{g}, \\
\hat{J} =&\, d \left [ x (d\phi + \bar\sigma)\right ].
\end{split}
\end{equation}
with $\bar{g}$ a K\"ahler-Einstein metric on some $(2n-2)$-dimensional base space $K$, normalized so that $\ric (\bar{g}) = 2n \bar{g}$, and with K\"ahler form $\bar{\hat{J}} = d\bar\sigma/2$. The choice
\begin{equation}
\label{eqA.5}
P(x) = x^n  - \left(\frac{\alpha^2}{m(n+1)} + \lambda \right)\frac{x^{n+1}}{n} + c,
\end{equation}
where $c$ is a constant, guarantees that $\hat{g}$ is itself Einstein with the correct scaling. Note that if one chooses $(K,\bar{g})$ to be $\mathbb{CP}^{n-1}$ with its homogeneous Fubini-Study metric, \eqref{eqA.5} is cohomogeneity-one, but in general, it need only have one Killing field, namely $\partial_\phi$. The construction described above produces a local quasi-Einstein metric
\begin{equation}
\label{eqA.6}
\begin{split}
g =&\, (d\psi + \beta x (d\phi + \bar\sigma))^2 + \hat{g},\\
X =&\, \alpha \frac{\partial}{\partial \psi},\\
\beta =&\, \sqrt{\frac{2 \alpha^2}{m n} + \frac{2\lambda}{n}}.
\end{split}
\end{equation}
Under suitable restrictions on the parameters, the local metric $g$ extends to a smooth metric on a compact manifold \cite{Gauntlett:2004yd, Gauntlett:2004hh}. Potential singularities may occur at $x=0$ and roots of $P(x)$. The global analysis of metrics of this type was carried out in \cite{Kunduri:2012uq} which shows that (for each choice of $\alpha$)  there exists a countably infinite family of smooth quasi-Einstein manifolds $M^{p,q}$ labelled by a pair of integers $(p,q)$ satisfying $1 < I p / q < 2$ where $I$ is the Fano index of $K$. Assuming $p$ and $q$ are co-prime, then $M^{p,q}$ is a lens space bundle over $K$; for $n=2$, $M^{p,q} \cong {\mathbb S}^2 \times {\mathbb S}^3$.

If one begins the above construction by taking $(B, \hat{g}, \hat{J})$ to have no continuous globally defined isometries (e.g., if it is negative Einstein), the construction will produce a quasi-Einstein manifold with a single continuous isometry generated by $X$. In this sense, Theorem \ref{theorem1.1} is sharp.


\begin{thebibliography}{99}
\bibitem{ABR} C Aquino, A Barros, and E Ribeiro Jr, \emph{Some applications of the Hodge-de Rham decomposition to Ricci solitons}, Results Math 60 (2011) 245--254.
\bibitem{BGKW} E Bahuaud, S Gunasekaran, HK Kunduri, and E Woolgar, \emph{Static near-horizon geometries and rigidity of quasi-Einstein manifolds}, Lett Math Phys 112:116, (2022) 1--16.
\bibitem{BR} A Barros and E Ribeiro Jr, \emph{Integral formulae on quasi-Einstein manifolds and applications} Glasgow Math J 54 (2012) 213--223.
\bibitem{Case} JS Case, \emph{On the nonexistence of quasi-Einstein metrics}, Pac J Math 248 (2010) 277--284.
\bibitem{CSW} JS Case, Y-J Shu, and G Wei, \emph{Rigidity of quasi-Einstein metrics}, Diff Geom Appl 29 (2011) 93--100.
\bibitem{CRT} PT Chru\'sciel, HS Reall, and P Tod, \emph{On non-existence of static vacuum black holes with degenerate components of the event horizon}, Class Quantum Gravit 23 (2006) 549--554.
\bibitem{DL} M Dunajski and J Lucietti, \emph{Intrinsic rigidity of extremal horizons}, preprint [arxiv:2306.17512].
\bibitem{FFT} AA Freitas Filho, and K Tenenblat, \emph{On generalized quasi-Einstein manifolds}, J Geom Phys 178 (2022) 104562.
%\cite{Gauntlett:2004yd}
\bibitem{Gauntlett:2004yd}
JP Gauntlett, D Martelli, J Sparks, and D Waldram,
\emph{Sasaki-Einstein metrics on ${\mathbb S}^2\times {\mathbb S}^3$},
Adv Theor Math Phys \textbf{8} (2004) 711--734.
%doi:10.4310/ATMP.2004.v8.n4.a3
%[arXiv:hep-th/0403002 [hep-th]].

%\cite{Gauntlett:2004hh}
\bibitem{Gauntlett:2004hh}
JP Gauntlett, D Martelli, JF Sparks, and D Waldram,
\emph{A New infinite class of Sasaki-Einstein manifolds},
Adv Theor Math Phys \textbf{8} (2004) 987--1000.
%doi:10.4310/ATMP.2004.v8.n6.a3
%[arXiv:hep-th/0403038 [hep-th]].
%234 citations counted in INSPIRE as of 15 Jun 2023

\bibitem{KK} D-S Kim and Y H Kim, \emph{Compact Einstein warped product spaces with nonpositive scalar curvature}, Proc Amer Math Soc 131 (2003) 2573--2576.

%\cite{Kunduri:2012uq}
\bibitem{Kunduri:2012uq}
HK Kunduri and J Lucietti,
\emph{Extremal Sasakian horizons},
Phys Lett B\textbf{713} (2012) 308--312.
%doi:10.1016/j.physletb.2012.05.065
%[arXiv:1204.5149 [hep-th]].

\bibitem{KL} HK Kunduri and J Lucietti, \emph{Classification of near-horizon geometries of extremal black holes}, Living Reviews in Relativity 16--8 (2013).
\bibitem{Lim} A Lim, \emph{Locally homogeneous non-gradient quasi-Einstein 3-manifolds}, Adv  Geom 22 (2022) 79--93.

\bibitem{Petersen} P Petersen, \emph{Riemannian geometry}, second edition, Graduate Texts in Mathematics 171 (Springer, New York, 2006).
\bibitem{Wylie} W Wylie, Rigidity of compact static near-horizon geometries with
negative cosmological constant, Lett Math Phys 113:29 (2023) 1--5.
\end{thebibliography}
\end{document}